\theoremstyle{definition}
\newtheorem{lemma}{Lemma}
\newtheorem{theorem}[lemma]{Theorem}
\newtheorem*{theorem*}{Theorem}
\newtheorem{definition}[lemma]{Definition}
\newtheorem{remark}[lemma]{Remark}
\newcommand{\R}{\mathbb{R}}
\renewcommand{\H}[1]{\mathbb{H}^{#1}}
\newcommand{\grad}{\mathrm{grad}}
\renewcommand{\div}{\mathrm{div}}
\newcommand{\vv}{\mathbf{v}}
\newcommand{\ff}{\mathbf{f}}
\begin{document}

\title{Killing mean curvature solitons from Riemannian submersions}

\author{Diego Artacho}
\address{D.~Artacho: Department of Mathematics, Faculty of Natural Sciences, Imperial College London, 180 Queen's Gate London SW72AZ, UK}
\email{d.artacho21@imperial.ac.uk}

\author{Marie-Am\'{e}lie Lawn}
\address{M.-A.~Lawn: Department of Mathematics, Faculty of Natural Sciences, Imperial College London, 180 Queen's Gate London SW72AZ, UK}
\email{m.lawn@imperial.ac.uk}

\author{Miguel Ortega}
\address{M.~Ortega: Institute of Mathematics, Department of Geometry and Topology, Faculty of Sciences, Campus de Fuentenueva, 18079 Granada, Spain}
\email{miortega@ugr.es}

\begin{abstract}
   We present a new general construction of mean curvature flow solitons on manifolds admitting a nowhere-vanishing Killing vector field. Using Riemannian submersion techniques, we reduce the problem from a PDE to an ODE. As an application, we obtain new examples of rotators in hyperbolic space. 
\end{abstract}

\maketitle

\section{Introduction} \label{sec:intro}

Let $M^n$ and $N^{n+1}$ be smooth oriented manifolds. Let $g$ be a Riemannian metric on $N$, and $\nabla$ its Levi-Civita connection. A smooth one-parameter family of immersions $F \colon M \times [0, \varepsilon) \to N$ is said to \emph{evolve by mean curvature flow} if
\begin{equation}\label{eq:mcf}
\left( \frac{\partial F_t}{\partial t} \right)^{\perp} = \vec{H}_t \, ,
\end{equation}
where $F_t (x) := F(x,t)$, $(\cdot)^{\perp}$ indicates $g$-orthogonal projection to the normal bundle of $F_t$, and $\vec{H}_t$ is the mean curvature vector of $F_t$, given by 
\begin{equation*}
\vec{H}_t = \sum_{i=1}^{n} \left(\nabla_{e_i} e_i\right)^{\perp} 
\end{equation*}
if $(e_1 , \dots , e_n)$ is any local orthonormal frame along $F_t$. 

A particular kind of solution to~\eqref{eq:mcf} has attracted much interest in recent decades. If $\mathcal{G}$ is a smooth one-parameter family of isometries of $N$ with $\mathcal{G}_0 = \mathrm{Id}_N$, we say that a one-parameter family of immersions $F$ evolving by mean curvature flow is a \emph{$\mathcal{G}$-soliton} if
\begin{equation*}
F_t = \mathcal{G}_t \circ F_0 \, .
\end{equation*}
In the class of solitons, also called self-similar solutions, solving~\eqref{eq:mcf} amounts to finding a manifold $M$ and an immersion $F_0 \colon M \to N$ with mean curvature vector  
\begin{equation}\label{eq:mcf2}
    \vec{H} = K^{\perp} \, ,
\end{equation}
where $K$ is the Killing vector field determined by $\mathcal{G}$, i.e., 
\begin{equation} \label{eq:def_K}
    K(x) = \frac{d \mathcal{G}_t}{dt}\big\rvert_{t=0} (x) \, .
\end{equation}
This was proved in the seminal work~\cite{HS00}. If $\nu$ is a unit normal vector field to $F_0$, we can define the \emph{mean curvature with respect to $\nu$} to be $H := g \left( \vec{H} , \nu \right)$, and the vectorial equation~\eqref{eq:mcf2} becomes
\begin{equation}\label{eq:mcf3}
    H = g \left( K , \nu \right) \, .
\end{equation}
Those solutions which are invariant under a group of translations are the so-called \emph{translators}, which have been extensively studied in the literature -- see e.g.~\cite{CSS, LTW, MSHS}. In~\cite{ALR}, the concept of self-similar solutions is extended to a more general setting, allowing $\mathcal{G}$ to be a one-parameter family of \emph{conformal} maps. In particular, they study the example of a class of Riemannian warped products $P \times_h I$ as the ambient space where a conformal vector field naturally arises from the distinguished one-dimensional direction $I$. For the study of mean curvature flow in warped products, we also refer to~\cite{CMR, DS}, with~\cite{ALO} focusing on the Robertson-Walker case.

Graphical solutions to mean curvature flow were initially introduced by Ecker and Huisken \cite{H} for hypersurfaces in Euclidean space, and later extensively studied by numerous authors in various ambient spaces -- see e.g.~\cite{CSS, HIMW, LO, LM}. In all of them, they are graphs of solutions to a certain PDE. However, finding solutions to PDEs is generally a challenging task. A common strategy is to focus on solutions that are invariant under certain symmetries of the manifold, which can simplify the problem. We refer, for instance, to~\cite{CSS} for the study of solutions on $\mathbb{R}^n$ invariant by rotational symmetries. 

Our strategy is as follows. On a Riemannian manifold $(N,g)$, we first consider a Killing vector field $K$ that is nowhere zero, and the goal will be to construct solutions to mean curvature flow that are solitons with respect to the flow of $K$. In order to obtain solitons which can be described as solutions to a PDE, we need the orthogonal distribution to $K$ to be integrable, and all the leaves to be diffeomorphic to a fixed manifold $M$. In this way, $(N,g)$ inherits locally a warped product structure, and the soliton will be the graph of a smooth function over a subset of $M$. Secondly, we shall restrict our attention to solutions that are invariant under certain symmetries of $N$, usually coming from Lie group actions. The main idea of this paper is the use of \emph{special submersions} from $M$ to a one-dimensional base space that preserve key geometric properties, which in some cases allow one to reduce the PDE to an ODE.  

More precisely, we consider a manifold $(N,g)$ endowed with a warped product structure 
\[ 
(N,g) \cong M \times_{\varphi} I := \left( M \times I , g_M + \varphi dr^2 \right) \, , \] 
where $\left( M,g_M \right)$ is a Riemannian manifold, $I \subseteq \R$ is an interval, $\varphi \colon M \to (0,+\infty)$ is a smooth function, and $dr^2$ is the Euclidean metric on $I$. This suggests a natural one-parameter group of isometries of $N$, namely $\mathcal{G}_t (x,r) = (x , r+t)$. Moreover, there is a natural class of hypersurfaces of $N$, namely graphs of smooth functions $u \colon M \supset \Omega \to I$, where $\Omega$ is an open subset of $M$. In Section~\ref{sec:PDE}, we show that such a hypersurface is a $\mathcal{G}$-soliton if and only if $u$ satisfies a certain PDE - see Theorem~\ref{thm:pde}. This is a natural generalisation of the previously cited works. With this in mind, suppose  $\pi \colon \left( M,g_M \right) \to \left( J , \alpha(s) ds^2 \right)$ is a Riemannian submersion, where $J \subseteq \R$ is an interval, $\alpha \colon J \to (0,+\infty)$ is smooth, and $ds^2$ denotes the Euclidean metric on $J$. Furthermore, assume that both the function $u$ and $\varphi$ factor through $\pi$, i.e. $u = f \circ \pi$, and $\varphi=\widehat{\varphi}\circ \pi$ for some smooth functions $f,\, \widehat{\varphi}:J\rightarrow I$. If, moreover, $\pi$ has constant mean curvature fibres $h$, then the condition that the graph of $u$ defines a $\mathcal{G}$-soliton on the total space is equivalent to $f$ being a solution to an ODE on the base. This yields our main Theorem~\ref{thm:ode}, proved in Section~\ref{sec:ODE}: 
\begin{theorem*}[see Theorem~\ref{thm:ode}]
    In our previous setting, the graph of $u=f\circ \pi$ defines a $\mathcal{G}$-soliton on $N$ if and only if $f$ satisfies the ordinary differential equation:
\[ 
    f'' = \left(\alpha + \widehat{\varphi} (f')^2\right)
    \left(1 - \frac{\widehat{\varphi}' f'}{2 \widehat{\varphi} \alpha} - \frac{f' h}{\sqrt{\alpha}}\right)
    + \frac{f'}{2} \left( \log \left( \frac{\alpha}{\widehat{\varphi}} \right) \right)'.
\] 
\end{theorem*}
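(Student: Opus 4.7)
The plan is to start from the PDE characterisation of $\mathcal{G}$-solitons provided by Theorem~\ref{thm:pde}, substitute the ansatz $u = f\circ\pi$ and $\varphi = \widehat{\varphi}\circ\pi$, and exploit the Riemannian submersion $\pi$ to replace every geometric quantity on $M$ by a function of the single base coordinate $s\in J$.

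First I would record the two submersion identities that do most of the work. For a basic function $w = \omega\circ\pi$ on $M$, the gradient $\nabla w$ is horizontal with
\[
|\nabla w|_{g_M}^{\,2} \;=\; \frac{(\omega'\circ\pi)^2}{\alpha\circ\pi}.
\]
For the Laplacian, the O'Neill-type decomposition applied to basic functions gives
\[
\Delta_{g_M}(\omega\circ\pi) \;=\; (\Delta_J\omega)\circ\pi \;-\; g_M\bigl(\vec\eta,\,\nabla(\omega\circ\pi)\bigr),
\]
where $\vec\eta$ is the mean curvature vector field of the fibres of $\pi$. The hypothesis that these fibres have constant mean curvature $h$ reduces the correction term to a scalar multiple of $\omega'/\sqrt{\alpha}$. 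It will also be useful to write out the explicit one-dimensional formula $\Delta_J\omega = \omega''/\alpha - \omega'\alpha'/(2\alpha^2)$, since this is what will ultimately produce the logarithmic derivative appearing in the statement.

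Next I would substitute into the PDE of Theorem~\ref{thm:pde}. Since both $u$ and $\varphi$ are basic, every scalar quantity appearing in that PDE (such as $|\nabla u|^2$, $g_M(\nabla u,\nabla\varphi)$, $\Delta u$) descends to a function of $s$ alone, which already establishes that the soliton equation becomes an ODE in $f$. The remaining work is algebraic: the denominator $1 + \varphi|\nabla u|^2$ coming from the graphical mean curvature produces the factor $\alpha + \widehat{\varphi}(f')^2$ after clearing $\alpha$; the fibre mean curvature $h$ contributes the $f'h/\sqrt{\alpha}$ term; and the pieces $\alpha'/\alpha$ and $\widehat{\varphi}'/\widehat{\varphi}$ arising from $\Delta_J$ and from the warping combine into the logarithmic derivative $(\log(\alpha/\widehat{\varphi}))'/2$ displayed in the conclusion.

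The main obstacle is bookkeeping. One must fix once and for all the sign convention for the mean curvature vector of the fibres, track how $\nabla u$ and $\nabla\varphi$ couple through the warping factor $\widehat{\varphi}$, and then collapse the resulting collection of terms in $\alpha,\alpha',\widehat{\varphi},\widehat{\varphi}',f',f''$ and $h$ into the compact form stated. Each individual step is elementary -- direct substitution and the two submersion identities above -- but producing the exact coefficient on each term, and in particular recognising the combination $(\log(\alpha/\widehat{\varphi}))'$, requires meticulous care in the final algebraic rearrangement.
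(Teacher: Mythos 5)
Your proposal is correct and follows essentially the same route as the paper: substitute $u=f\circ\pi$ into the PDE of Theorem~\ref{thm:pde}, use $\nabla u=(f'\circ\pi)\nabla\pi$ with $g_M(\nabla\pi,\nabla\pi)=1/(\alpha\circ\pi)$, and reduce the divergence term via the constant mean curvature of the fibres (the paper computes $\mathrm{div}(\nabla\pi)=h/\sqrt{\alpha}-\alpha'/(2\alpha^2)$ directly rather than invoking the O'Neill-type Laplacian decomposition, but these are the same identity). The remaining steps are the algebraic bookkeeping you describe, carried out explicitly in the paper.
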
 

We can summarise our method schematically as follows:  

{\small \begin{equation} \label{eq:method}
\begin{aligned}
\text{Killing} \\
\text{vector field}
\end{aligned}
\quad \rightsquigarrow \quad
\begin{aligned}
\text{Warped} \\
\text{product}
\end{aligned}
\quad \rightsquigarrow \quad
\text{PDE}
\quad \rightsquigarrow \quad
\begin{aligned}
\text{Riemannian} \\
\text{submersion}
\end{aligned}
\quad \rightsquigarrow \quad
\text{ODE}
\quad \rightsquigarrow \quad
\text{Soliton. }
\end{equation} }

This procedure is very general and can be used to produce many solitons, including those in~\cite{BL25}. To illustrate our technique, we study solitons on the hyperbolic space $\mathbb{H}^n$, where we find a new example of complete rotator -- see Section~\ref{sec:applications}.

Finally, we outline the structure of the paper:
\begin{itemize}
    \item In Section~\ref{sec:PDE}, we obtain the PDE that a function has to satisfy for its graph to be a soliton -- see Theorem~\ref{thm:pde}. 
    \item In Section~\ref{sec:ODE}, we obtain the ODE that results from imposing symmetry conditions on the soliton -- see Theorem~\ref{thm:ode}. 
    \item In Section~\ref{sec:applications}, we use our new method to obtain a new example of complete rotator in hyperbolic space. 
\end{itemize}

\section{The PDE}\label{sec:PDE}

Let $\Omega \subseteq M$ be open, let $u \colon \Omega \to I$ be a smooth function, and let $F \colon \Omega \to M \times_{\varphi} I$ be its graph $x \mapsto (x,u(x))$. In this section, we shall prove that $F$ is a soliton, i.e., it satisfies equation~\eqref{eq:mcf3},  if and only if $u$ satisfies a certain PDE~\eqref{eq:pde}. More specifically, we have the following result:

\begin{theorem}\label{thm:pde}
Let $(M,g_M)$ be an oriented Riemannian manifold, $\varphi \colon M \to (0,+\infty)$ a smooth function, $I \subseteq \R$ an interval and $(N,g) = (M \times I , g_M + \varphi dr^2)$, where $r$ is the usual coordinate in $\R$. Let $\mathcal{G}$ be the one-parameter family of isometries of $N$ given by $\mathcal{G}_t (x,r) = (x , r+t)$. Then, if $\Omega \subseteq M$ is open, the graph of a smooth function $u \colon \Omega \to I$ defines a $\mathcal{G}$-soliton on $N$ if and only if $u$ satisfies
\begin{equation}\label{eq:pde}
    \mathrm{div}\left(\frac{\nabla u}{W}\right)=\frac{1}{W}-\frac{1}{2W\varphi}g(\nabla \varphi,\nabla u) \, ,
\end{equation}
where the function $W \colon M \to (0,+\infty)$ is defined by
\begin{equation}\label{eq:defW}
    W = \sqrt{g_M(\nabla u , \nabla u) + \frac{1}{\varphi}} \, .
\end{equation}
\end{theorem}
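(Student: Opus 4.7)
The plan is to translate the soliton equation $H = g(K, \nu)$ into the claimed PDE by identifying both sides explicitly. The family $\mathcal{G}_t(x,r) = (x, r+t)$ is generated by the Killing vector field $K = \partial_r$, and the tangent vectors to the graph at $(x, u(x))$ are $E_i = e_i + e_i(u)\,\partial_r$ for any frame $\{e_i\}$ on $M$. First I would determine the unit normal: a normal vector must have the form $\nu = c(-\varphi\,\nabla u + \partial_r)$ to be $g$-orthogonal to every $E_i$, and the normalization $g(\nu,\nu) = 1$ combined with $W^2 = |\nabla u|^2 + 1/\varphi$ pins down $\nu = \frac{1}{W}\bigl(-\nabla u + \frac{1}{\varphi}\partial_r\bigr)$. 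A direct computation then gives the right-hand side $g(K,\nu) = 1/W$ instantly.

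Next I would compute the Levi-Civita connection $\bar\nabla$ of $g = g_M + \varphi\, dr^2$ via Koszul's formula, working with horizontal lifts of vector fields on $M$ (which commute with $\partial_r$). The relevant identities are $\bar\nabla_X Y = \nabla^M_X Y$ for $X, Y$ horizontal, $\bar\nabla_X \partial_r = \bar\nabla_{\partial_r} X = \frac{X(\varphi)}{2\varphi}\partial_r$, and $\bar\nabla_{\partial_r}\partial_r = -\frac{1}{2}\nabla \varphi$. With these in hand, expanding $\bar\nabla_{E_i} E_j$ at a point where I choose $\{e_i\}$ to be $g_M$-orthonormal with $\nabla^M e_j = 0$ reduces everything to the Hessian of $u$ and derivatives of $\varphi$.

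Then I would contract with the inverse induced metric $g^{ij} = \delta_{ij} - u_i u_j/W^2$ (obtained from $g_{ij} = \delta_{ij} + \varphi\,u_i u_j$ by a standard rank-one inverse) to form $H = \sum g^{ij} g(\bar\nabla_{E_i} E_j, \nu)$. Grouping the terms yields, after using $1 - |\nabla u|^2/W^2 = 1/(\varphi W^2)$,
\[
    H = \frac{\Delta u}{W} - \frac{\mathrm{Hess}\,u(\nabla u, \nabla u)}{W^3} + \frac{g_M(\nabla u, \nabla \varphi)}{\varphi^2 W^3} + \frac{|\nabla u|^2 g_M(\nabla \varphi, \nabla u)}{2\varphi W^3}.
\]
The final step is to recognize a divergence. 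Differentiating $W^2 = |\nabla u|^2 + 1/\varphi$ gives $W\,\nabla W = \mathrm{Hess}\,u(\nabla u, \cdot) - \nabla\varphi/(2\varphi^2)$, so
\[
    \mathrm{div}\!\left(\frac{\nabla u}{W}\right) = \frac{\Delta u}{W} - \frac{\mathrm{Hess}\,u(\nabla u, \nabla u)}{W^3} + \frac{g_M(\nabla u, \nabla \varphi)}{2\varphi^2 W^3}.
\]
Subtracting and simplifying using $|\nabla u|^2 + 1/\varphi = W^2$ collapses the remaining warping terms into $\frac{g_M(\nabla u, \nabla \varphi)}{2\varphi W}$, giving $H = \mathrm{div}(\nabla u/W) + \frac{g_M(\nabla u, \nabla \varphi)}{2\varphi W}$. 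Setting this equal to $g(K, \nu) = 1/W$ produces exactly \eqref{eq:pde}.

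The main obstacle is bookkeeping: the warping function $\varphi$ contributes to $\nu$, to the connection coefficients, and to the induced metric simultaneously, so the mixed terms $g_M(\nabla u, \nabla \varphi)$ appear with several different prefactors and must be reassembled carefully. The key algebraic identity that makes the reassembly work is $\varphi W^2 = 1 + \varphi|\nabla u|^2$, which converts the two leftover $\nabla\varphi$ contributions into a single multiple of $1/(2\varphi W)$.
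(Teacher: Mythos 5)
Your proposal is correct and follows essentially the same route as the paper: the same unit normal $\nu = \frac{1}{W}\left(-\nabla u + \frac{1}{\varphi}\partial_r\right)$ giving $g(K,\nu)=1/W$, the same inverse induced metric $\gamma^{ij}=\delta_{ij}-u_iu_j/W^2$, the same warped-product connection identities (which the paper derives via Koszul rather than quoting), and the same final step of recognising $\mathrm{div}(\nabla u/W)$ via $W\,\nabla W = \mathrm{Hess}\,u(\nabla u,\cdot)-\nabla\varphi/(2\varphi^2)$; your intermediate expression for $H$ agrees exactly with the paper's equation~\eqref{eq:nH}. The only cosmetic difference is your use of a geodesic normal frame at a point, versus the paper's rewriting of $g(\nabla_{e_i}e_j,\nabla u)$ in terms of the Hessian.
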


\begin{proof}
The Killing vector field associated to the one-parameter family of isometries $\mathcal{G}$ is just the coordinate vector field $\partial_r$ corresponding to the interval $I$. It is easily verified that a unit normal vector field to $F$ is given by
\begin{equation*}
\nu = \frac{1}{W} \left( - \nabla u + \frac{1}{\varphi} \partial_r \right) \, .
\end{equation*}
Hence, the right-hand side of equation~\eqref{eq:mcf3} is
\begin{equation}\label{eq:Knu}
g \left( \partial_r , \nu \right) = \frac{1}{W} \, .
\end{equation}
Next, we need to compute the mean curvature $H$ of $F$. To this end, let $(e_1 , \dots , e_n)$ be a local $g_M$-orthonormal frame of $M$. The mean curvature $H$ is given by
\begin{equation*}
H = g \left( \vec{H} , \nu \right) = \sum_{i,j=1}^{n} \gamma^{ij} g \left( \nabla_{F_* e_i} F_* e_j , \nu \right) \, ,
\end{equation*}
where $\gamma^{ij}$ are the coefficients of the inverse of the matrix $\gamma_{ij} = g \left( F_* e_i , F_* e_j \right)$. By definition of $F$ as the graph of $u$, $F_* e_i = e_i + u_i \partial_r$, where $u_i = e_i(u)$. Note that $\nabla u = \sum_{i=1}^{n} u_i e_i$. Hence,
\begin{equation*}
    \gamma_{ij} = \delta_{ij} + \varphi u_i u_j \quad \text{and thus} \quad \gamma^{ij} = \delta_{ij} - \frac{1}{W^2} u_i u_j \, .
\end{equation*}
Moreover,
\begin{align*}
    \nabla_{F_* e_i} F_* e_j = \nabla_{e_i + u_i \partial_r} \left( e_j + u_j \partial_r \right) = \nabla_{e_i} e_j + u_{ij} \partial_r + u_j \nabla_{e_i} \partial_r + u_i \nabla_{\partial_r} e_j + u_i u_j \nabla_{\partial_r} \partial_r \, ,
\end{align*}
where $u_{ij} = e_i(u_j)$. Furthermore, using Koszul's formula and the obvious facts that $[\partial_r,e_i] = 0$ and $g(\partial_r,[e_i,e_j]) = 0$, we see that
\[
2g\left(\nabla_{\partial_r} e_i,e_j\right)
=\partial_r\left(g\left(e_i,e_j\right)\right)
+e_i(g\left(\partial_r,e_j\right))-e_j(g\left(\partial_r,e_i\right))
 =\partial_r\left( g\left(e_i,e_j\right)\right)=0 \, .
\]
and therefore $g(\nabla_{\partial_r} X,Y)=0$ for any vector fields $X,Y$ tangent to $M$ and independent of $r$. Hence, using also that $0 = \partial_r(\varphi) = \partial_r(g(\partial_r,\partial_r))=2g(\nabla_{\partial_r} \partial_r,\partial_r)$ and that the leaves of the foliation are totally geodesic, we obtain
\begin{align*}
   g& \left( \nabla_{F_* e_i} F_* e_j , \nu \right)  \\ =& g\left(\nabla_{e_i}e_j,\nu\right)
    +u_{ij}g\left(\partial_r,\nu\right)
    +u_j g\left(\nabla_{e_i} \partial_r,\nu\right)
    +u_i g\left(\nabla_{\partial_r} e_j,\nu\right)
    +u_i u_j g\left(\nabla_{\partial_r} \partial_r,\nu\right) \\
  =  & \frac{1}{\varphi W} g\left(\nabla_{e_i}e_j,\partial_r\right)
    -\frac{1}{W}g\left(\nabla_{e_i}e_j,\nabla u\right)  +\frac{u_{ij}}{W} \\
     & +\frac{u_j}{\varphi W}g\left(\nabla_{e_i}\partial_r,\partial_r\right)
    -\frac{u_j}{W}g\left(\nabla_{e_i}\partial_r,\nabla u\right)
     +\frac{u_i}{\varphi W}g\left(\nabla_{\partial_r}e_j,\partial_r\right) \\
    &-\frac{u_i}{W}g\left(\nabla_{\partial_r}e_j,\nabla u\right)
    + \frac{u_iu_j}{\varphi W}g(\nabla_{\partial_r}\partial_r,\partial_r) - \frac{u_iu_j}{W} g(\nabla_{\partial_r}\partial_r,\nabla u)\\
    = &\frac{-1}{W}g\left(\nabla_{e_i}e_j,\nabla u\right)+\frac{u_{ij}}{W}
    +\frac{u_j}{\varphi W}g\left(\nabla_{e_i}\partial_r,\partial_r\right)
    +\frac{u_j}{W}g\left(\partial_r,\nabla_{e_i}\nabla u\right)\\
    &+\frac{u_i}{\varphi W} g\left(\nabla_{\partial_r}e_j,\partial_r\right)-\frac{u_i}{W}g\left(\nabla_{\partial_r}e_j,\nabla u\right)-\frac{u_iu_j}{W}g\left(\nabla_{\partial_r} \partial_r,\nabla u\right)\\
    =& \frac{-1}{W}g\left(\nabla_{e_i}e_j,\nabla u\right)
    +\frac{u_{ij}}{W}+\frac{u_j}{\varphi W}g\left(\nabla_{e_i}\partial_r,\partial_r\right)
    +\frac{u_i}{\varphi W}g\left(\nabla_{\partial_r}e_j,\partial_r\right)
    -\frac{u_iu_j}{W}g\left(\nabla_{\partial_r} \partial_r,\nabla u\right).
    \end{align*}
As $\partial_r$ is a Killing vector field, we have that
\begin{equation}\label{eq:ci}
\varphi_i:=e_i(g(\partial_r,\partial_r))=2g\left(\nabla_{e_i}\partial_r,\partial_r\right)=-2g\left(\nabla_{\partial_r} \partial_r,e_i\right)=2g\left(\nabla_{\partial_r}e_i,\partial_r\right) \, .
\end{equation}
Moreover,
\[g\left(\nabla_{e_i}e_j,\nabla u\right)
=e_i(g(e_j,\nabla u))-g\left(\nabla_{e_i}\nabla u,e_j\right)=u_{ij}-g(\nabla_{e_i}\nabla u,e_j) \, , \]
obtaining
\begin{equation}
    g \left( \nabla_{F_* e_i} F_* e_j , \nu \right)=\frac{1}{W}g\left(\nabla_{e_i}\nabla u,e_j\right)+\frac{u_j\varphi_i}{2\varphi W}+\frac{u_i\varphi_j}{2\varphi W}-\frac{u_iu_j}{W}g\left(\nabla_{\partial_r} \partial_r,\nabla u\right).
\end{equation}
Putting everything together, we obtain
\begin{eqnarray*}
H&=& \sum_{i,j=1}^{n} \gamma^{ij} g \left( \nabla_{F_* e_i} F_* e_j , \nu \right)\\
&=&\frac{\mathrm{div}(\nabla u)}{W} -\sum^{n}_{i,j=1}\frac{1}{W^3}u_iu_jg\left(\nabla_{e_i}\nabla u,e_j\right)\\
&&+\frac{1}{W}\sum^{n}_{i,j=1}\delta_{ij}\left[-u_iu_jg\left(\nabla_{\partial_r} \partial_r,\nabla u\right)+\frac{u_j\varphi_i}{2\varphi}+\frac{u_i\varphi_j}{2\varphi}\right]\\
&&-\frac{1}{W^3}\sum^{n}_{i,j=1}\left[-u_i^2u_j^2g\left(\nabla_{\partial_r} \partial_r,\nabla u\right)+\frac{u_iu_j^2\varphi_i}{2\varphi}+\frac{u_i^2u_j\varphi_j}{2\varphi}\right]\\
&=&\frac{\mathrm{div}(\nabla u)}{W}
-\sum^{n}_{i,j=1}\frac{1}{W^3}u_iu_jg\left(\nabla_{e_i}\nabla u,e_j\right) +\frac{1}{W}\left[-\sum^{n}_{i=1}u_i^2g\left(\nabla_{\partial_r} \partial_r,\nabla u\right)+\sum^{n}_{i=1}\frac{u_i\varphi_i}{\varphi}\right]\\
&&-\frac{1}{W^3}\left[-|\nabla u|^4
g\left(\nabla_{\partial_r} \partial_r,\nabla u\right)+\frac{1}{\varphi}\sum^{n}_{i,j=1}u_i\varphi_iu_j^2\right]\\
&=&\frac{\mathrm{div}(\nabla u)}{W} -\sum^{n}_{i,j=1}\frac{1}{W^3}u_iu_jg\left(\nabla_{e_i}\nabla u,e_j\right)\\
&&+\frac{1}{W^3}\left[-W^2|\nabla u|^2g\left(\nabla_{\partial_r} \partial_r,\nabla u\right)+W^2\frac{g(\nabla \varphi,\nabla u)}{\varphi}\right]\\
&&-\frac{1}{W^3}\left[-|\nabla u|^4g\left(\nabla_{\partial_r} \partial_r,\nabla u\right)+\frac{g(\nabla \varphi,\nabla u)|\nabla u|^2}{\varphi}\right].
\end{eqnarray*}
Bearing in mind the definition of $W$~\eqref{eq:defW}, we get
\[ H = \frac{\mathrm{div}(\nabla u)}{W}-\sum^{n}_{i,j=1}\frac{1}{W^3}u_iu_jg(\nabla_{e_i}\nabla u,e_j)+\frac{1}{\varphi W^3}\left[-|\nabla u|^2g\left(\nabla_{\partial_r} \partial_r,\nabla u\right)+\frac{1}{\varphi}g(\nabla \varphi,\nabla u)\right].
\]
By~\eqref{eq:ci}, we see that $\nabla_{\partial_r} \partial_r=-\frac{1}{2}\nabla \varphi$. Hence,
\begin{equation} \label{eq:nH}
H=\frac{\mathrm{div}(\nabla u)}{W} -\sum^{n}_{i,j=1}\frac{1}{W^3}u_iu_jg(\nabla_{e_i}\nabla u,e_j)+\frac{1}{\varphi W^3}g(\nabla \varphi,\nabla u)\left(\frac{1}{2}|\nabla u|^2+\frac{1}{\varphi}\right).
\end{equation}
Notice now that, if $W_i = e_i (W)$,
\begin{eqnarray*}
\frac{\mathrm{div}(\nabla u)}{W}&=&
\mathrm{div}\left(\frac{\nabla u}{W}\right) -g\left(\nabla\left(\frac{1}{W}\right),\nabla u\right) =\mathrm{div}\left(\frac{\nabla u}{W}\right)+\sum_{i=1}^n\frac{W_i}{W^2}u_i\\
&=&\mathrm{div}\left(\frac{\nabla u}{W}\right)
+\sum_{i=1}^n  \frac{1}{2W^3}\left( -\frac{\varphi_i}{\varphi^2}+2g(\nabla_{e_i}\nabla u,\nabla u)\right)u_i\\
&=&\mathrm{div}\left(\frac{\nabla u}{W}\right) +\sum_{i,j=1}^n\frac{1}{W^3}u_iu_jg(\nabla_{e_i}\nabla u,e_j)-\frac{1}{2\varphi^2W^3}g(\nabla \varphi,\nabla u).
\end{eqnarray*}
By using equation~\eqref{eq:nH}, we obtain
\begin{eqnarray*}
H=\mathrm{div}\left(\frac{\nabla u}{W}\right)+\frac{1}{W^3}\left(\frac{1}{2\varphi}|\nabla u|^2+\frac{1}{2\varphi^2}\right)g(\nabla \varphi,\nabla u) \, ,
\end{eqnarray*}
and therefore
\begin{eqnarray}\label{H_Killinggraph}
H=\mathrm{div}\left(\frac{\nabla u}{W}\right)+\frac{1}{2W\varphi}g(\nabla \varphi,\nabla u) \, .
\end{eqnarray}
This, together with~\eqref{eq:Knu}, yields the result.
\end{proof}
\begin{remark} Some comments about other works are in order. First, our study is a generalisation of Lira and Mart{\'i}n's work,~\cite{LM}, where they studied translators in Riemannian products $(M\times\mathbb{R},g_M+ dt^2)$ which move in the direction of $\mathbb{R}$, because our setting reduces to theirs when $\varphi$ is a positive constant function. Second, a formally equal formula to~\eqref{H_Killinggraph} can be obtained in a semi-Riemannian environment by adjusting the corresponding definition of $W$, where some constant functions $\varepsilon,\widetilde{\varepsilon}=\pm 1$ must be considered, in a similar way as in~\cite{LO}. 
\end{remark}

\section{The ODE} \label{sec:ODE}

A geometric way to find solutions to the PDE~\eqref{eq:pde} is to look for functions $u \colon M \supseteq \Omega \to I$ that have some sort of symmetry. For example, when studying translating solitons in Euclidean space $\R^3 = \R^2 \times \R$ that are graphs of functions $u \colon \R^2 \to \R$, one tries to find rotationally invariant solutions to~\eqref{eq:pde}. This is an instance of a more general procedure by which one looks for solutions that are invariant under a cohomogeneity-one action of a Lie group on $M$ by isometries. Generalising this approach, it is natural to look for solutions that factor through a certain \emph{Riemannian submersion}. 

\begin{definition}A manifold $M\times_{\varphi} I$ will be called \emph{submersive} if there exist a Riemannian submersion $\pi:(M,g)\to (J,\alpha(s)ds^2)$, $\alpha:J\to\R^+$, and smooth functions $\hat{\varphi}:J\to I$ and $h:J\to\R$ such that $\varphi=\hat{\varphi}\circ\pi$ and for each $s\in J$, $\pi^{-1}\{s\}$ is a hypersurface in $M$ with constant mean curvature $h(s)$ with respect to $-\nabla\pi/\|\nabla\pi\|$.   
\end{definition} 
Next, given an open subinterval $J'\subset J$, consider a smooth function $f:J'\to I$,  so that we obtain the following commutative diagrams:

\begin{equation*}\label{diagrams}
    \xymatrix{ M \supset \pi^{-1}(J')  \ar[d]^{\pi} \ar[dr]^{u} \\ (J \supseteq J' , \alpha(s) ds^2) \ar[r]_-{f} & I } \qquad \qquad
    \xymatrix{M \supset \pi^{-1}(J')  \ar[r]^-{\mathrm{gr}(u)}  \ar[d]^{\pi}   &
    M\times_{\varphi} I = N \ar[d]^{\pi\times \mathrm{Id}} \\
    J \supseteq J' \ar[r]^-{\mathrm{gr}(f)}  & J \times_{\widehat{\varphi}} I }
    \end{equation*}
where $\mathrm{gr}(u)$ and $\mathrm{gr}(f)$ are the graphs of $u$ and $f$ respectively, we obtain the following general result:

\begin{theorem} \label{thm:ode}
%
Assume that the manifold $M\times_{\varphi}I$ is submersive, and such that the previous commutative diagrams concerning $f$ and $u=f\circ\pi$ are satisfied. Then, the graph $\mathrm{gr}(u)$  is a $\mathcal{G}$-soliton on $N$ if and only if $f$ satisfies the following ODE:
\begin{equation}\label{eq:ode}
        f''=\left(\alpha+\widehat{\varphi}(f')^2\right)\left( 1-\frac{\widehat{\varphi}'f'}{ 2\widehat{\varphi}\alpha} - \frac{f'h}{\sqrt{\alpha}}\right)
+\frac{f'}{2}\left(\log\left(\frac{\alpha}{\widehat{\varphi}}\right)\right)' \, .
\end{equation}
\end{theorem}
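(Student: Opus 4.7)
The plan is to substitute the ansatz $u=f\circ\pi$ and $\varphi=\widehat{\varphi}\circ\pi$ into the PDE~\eqref{eq:pde} and exploit the Riemannian submersion to reduce the resulting scalar identity on $\pi^{-1}(J')$ to an ODE on $J'$. Two geometric ingredients drive the reduction. First, since $\pi\colon(M,g_M)\to(J,\alpha(s)ds^2)$ is a Riemannian submersion, $\nabla\pi$ is horizontal with $|\nabla\pi|^2_{g_M}=1/\alpha$; by the chain rule one obtains $\nabla u=(f'\circ\pi)\nabla\pi$ and $\nabla\varphi=(\widehat{\varphi}'\circ\pi)\nabla\pi$, whence
\[
W^2=\frac{(f')^2}{\alpha}+\frac{1}{\widehat{\varphi}},\qquad g(\nabla\varphi,\nabla u)=\frac{\widehat{\varphi}'f'}{\alpha}.
\]
Second, since the fibre $\pi^{-1}(s)$ has mean curvature $h(s)$ with respect to the unit normal $-\sqrt\alpha\,\nabla\pi$, the standard identity $\mathrm{div}_M(\nu)=-H$ for a unit normal (with the paper's unnormalised convention) gives $\mathrm{div}(\sqrt\alpha\,\nabla\pi)=h$. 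Expanding the product rule with $\nabla\sqrt\alpha=(\alpha'/(2\sqrt\alpha))\nabla\pi$ then yields
\[
\Delta\pi=\frac{h}{\sqrt\alpha}-\frac{\alpha'}{2\alpha^2}.
\]

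With these in hand, I would compute the divergence appearing on the left-hand side of~\eqref{eq:pde}. Writing $\nabla u/W=(f'/W)\nabla\pi$, with $f'/W$ a function of $\pi$ alone, and applying $\mathrm{div}(\phi V)=\phi\,\mathrm{div}(V)+g(\nabla\phi,V)$, one finds
\[
\mathrm{div}\!\left(\frac{\nabla u}{W}\right)=\frac{f'}{W}\!\left(\frac{h}{\sqrt\alpha}-\frac{\alpha'}{2\alpha^2}\right)+\frac{1}{\alpha}\!\left(\frac{f'}{W}\right)',
\]
where the prime denotes differentiation in $s\in J$. The right-hand side of~\eqref{eq:pde} is already explicit in $s$, namely $1/W-\widehat{\varphi}'f'/(2W\widehat{\varphi}\alpha)$.

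The remaining work is algebraic. I would clear denominators by multiplying through by $\alpha W^3$, expand $(f'/W)'$ using $2WW'=2f'f''/\alpha-(f')^2\alpha'/\alpha^2-\widehat{\varphi}'/\widehat{\varphi}^2$, and then collect terms. The decisive simplification uses the defining relation $W^2-(f')^2/\alpha=1/\widehat{\varphi}$ twice: it collapses the two $f''$ contributions into the single term $f''/\widehat{\varphi}$, and the two $\alpha'$ contributions into $-f'\alpha'/(2\alpha\widehat{\varphi})$. Recognising $\alpha+\widehat{\varphi}(f')^2=\alpha\widehat{\varphi}W^2$ then extracts the factor multiplying the bracket in~\eqref{eq:ode}, while the residual $f'\alpha'/(2\alpha)-f'\widehat{\varphi}'/(2\widehat{\varphi})$ is precisely $(f'/2)(\log(\alpha/\widehat{\varphi}))'$.

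The main obstacle is not any individual geometric step — each uses only standard submersion geometry together with the unit-normal/mean-curvature dictionary — but rather the careful bookkeeping in this final algebraic reduction, where several terms cancel only after applying the relation $W^2=(f')^2/\alpha+1/\widehat{\varphi}$ at the right moments. Everything else follows directly from Theorem~\ref{thm:pde} and the submersive hypothesis.
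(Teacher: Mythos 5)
Your proposal is correct and follows essentially the same route as the paper: substitute $u=f\circ\pi$ into the PDE of Theorem~\ref{thm:pde}, use $g_M(\nabla\pi,\nabla\pi)=1/\alpha$ and the constant-mean-curvature fibres to get $\Delta\pi=\frac{h}{\sqrt\alpha}-\frac{\alpha'}{2\alpha^2}$, expand $\mathrm{div}(\nabla u/W)$ by the product rule, and reduce algebraically using $W^2-(f')^2/\alpha=1/\widehat{\varphi}$. The only cosmetic difference is that you derive $\Delta\pi$ from the identity $\mathrm{div}(\nu)=-H$ for the unit normal rather than by the paper's direct orthonormal-frame computation; the outlined cancellations in the final algebra all check out and yield~\eqref{eq:ode}.
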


\begin{remark} Note that the warping function $\varphi$ essentially determines the projection $\pi$, since $\varphi$ has to factor through it to allow our technique to work. 
\end{remark}
\begin{remark} This result applies to general warped products $N = M \times_{\varphi} I$. In particular, it applies whenever $N$ has a nowhere-vanishing Killing vector field with integrable orthogonal distribution, as discussed in Section~\ref{sec:intro}.
\end{remark}

\begin{proof}
This result is a consequence of Theorem~\ref{thm:pde}, and equation~\eqref{eq:ode} is obtained by substituting $u = f \circ \pi$ in~\eqref{eq:pde}. We now write the details of this computation. First, note that
\[ \nabla u = \nabla (f \circ \pi) = (f' \circ \pi)  \nabla \pi \, , \]
and thus, using that $\pi \colon (M,g_M) \to (J , \alpha(s) ds^2)$ is a Riemannian submersion, we get
\[ g_M \left( \nabla u , \nabla u \right) = (f' \circ \pi)^2 g_M \left( \nabla \pi , \nabla \pi \right) = \frac{(f' \circ \pi)^2}{(\alpha \circ \pi)} \, . \]
Hence, substituting this into equation~\eqref{eq:defW}, we get
\[ W = \sqrt{\frac{(f' \circ \pi)^2}{(\alpha \circ \pi)} + \frac{1}{(\widehat{\varphi} \circ \pi)}} = \widehat{W} \circ \pi \, , \quad \text{where} \quad \widehat{W} = \sqrt{\frac{(f')^2}{\alpha} + \frac{1}{\widehat{\varphi}}} \, . \]
A straightforward computation using the definition of $\widehat{W}$ yields
\[ \widehat{W}' = \frac{f' f''}{\alpha \widehat{W}} - \frac{(f')^2 \alpha'}{2 \alpha^2 \widehat{W}} - \frac{\widehat{\varphi}'}{2 \widehat{W} \widehat{\varphi}^2} \, . \]
Moreover, if $(e_1 , \dots e_n)$ is a local $g_M$-orthonormal frame of $M$ with $e_n = - \sqrt{\alpha \circ \pi} \, \nabla \pi$,
\begin{align*}
\div{\nabla \pi} &= \sum_{i=1}^{n} g_M \left( \nabla_{e_i} \nabla \pi , e_i \right) = - \sum_{i=1}^{n-1} g_M \left( \nabla \pi , \nabla_{e_i} e_i \right) + g_M \left( \nabla_{-\sqrt{\alpha \circ \pi} \nabla \pi} \nabla \pi , -\sqrt{\alpha \circ \pi} \nabla \pi \right) \\
&= \frac{1}{\sqrt{\alpha}} \sum_{i=1}^{n-1} g_M \left( - \sqrt{\alpha \circ \pi} \nabla \pi , \nabla_{e_i} e_i \right)  + (\alpha \circ \pi) g_M \left( \nabla_{\nabla \pi} \nabla \pi , \nabla \pi  \right) \\
&= \frac{(h \circ \pi)}{\sqrt{(\alpha \circ \pi)}} + \frac{1}{2}(\alpha \circ \pi) \nabla \pi \left( g_M \left( \nabla \pi , \nabla \pi \right) \right) = \frac{(h \circ \pi)}{\sqrt{(\alpha \circ \pi)}} + \frac{1}{2}(\alpha \circ \pi) \nabla \pi \left( \frac{1}{(\alpha \circ \pi)} \right) \\
&= \frac{(h \circ \pi)}{\sqrt{(\alpha \circ \pi)}} + \frac{1}{2}(\alpha \circ \pi) g_M \left( \nabla \pi , \nabla \left(\frac{1}{(\alpha \circ \pi)}\right) \right) = \frac{(h \circ \pi)}{\sqrt{(\alpha \circ \pi)}} - \frac{(\alpha' \circ \pi)}{2 (\alpha \circ \pi)^2} \, .
\end{align*}
Finally,  
\begin{align*}
\div \left( \frac{\nabla u}{W} \right) &= \div \left( \frac{(f' \circ \pi)}{W} \nabla \pi \right) = \frac{(f' \circ \pi)}{W} \div \left( \nabla \pi \right) + g_M \left( \nabla \pi , \nabla \left(\frac{(f' \circ \pi)}{W} \right) \right) \\
&= \frac{(f' \circ \pi)}{W} \div \left( \nabla \pi \right) + \frac{(f'' \circ \pi)W - (f' \circ \pi) (\widehat{W}' \circ \pi) }{W^2} g_M \left( \nabla \pi , \nabla \pi \right) \\
&= \frac{(f' \circ \pi)}{W} \div \left( \nabla \pi \right) + \frac{(f'' \circ \pi)}{W (\alpha \circ \pi)} - \frac{(f' \circ \pi)(\widehat{W}' \circ \pi)}{W^2 (\alpha \circ \pi)} \, ,
\end{align*}

and putting everything together we get that the left-hand side of the PDE~\eqref{eq:pde} for $u = f \circ \pi$ now reads
\begin{equation} \label{eq:proof_lhs}
\div \left( \frac{\nabla u}{W} \right) = \left( \frac{f'h}{\widehat{W} \sqrt{\alpha}} - \frac{f' \alpha'}{2 \widehat{W} \alpha^2} + \frac{f''}{\widehat{W} \alpha} \left( 1 - \frac{(f')^2}{\widehat{W}^2 \alpha} \right) + \frac{(f')^3 \alpha'}{2 \widehat{W}^3 \alpha^3} + \frac{f' \widehat{\varphi}'}{2 \widehat{W}^3 \alpha \widehat{\varphi}^2}  \right) \circ \pi \, .
\end{equation}
Similarly, the right-hand side of~\eqref{eq:pde} is now
\begin{equation} \label{eq:proof_rhs}
\frac{1}{W} - \frac{g_M \left( \nabla \varphi , \nabla u \right)}{2 W \varphi} = \frac{1}{W} - \frac{f' \widehat{\varphi}' g_M \left( \nabla \pi , \nabla \pi\right)}{2 (\widehat{W} \circ \pi)(\widehat{\varphi} \circ \pi)} = \left(\frac{1}{\widehat{W}} - \frac{f' \widehat{\varphi}'}{2 \widehat{\varphi} \widehat{W} \alpha} \right) \circ \pi \, .
\end{equation}
Finally, equating~\eqref{eq:proof_lhs} and~\eqref{eq:proof_rhs}, we obtain the ODE~\eqref{eq:ode}.
\end{proof}

\begin{remark}
Note that, if $\alpha$ and $\widehat{\varphi}$ are constant equal to $1$, equation~\eqref{eq:pde} reduces to the usual mean curvature flow equation in the Riemannian product $(M \times \mathbb{R},g + dr^2)$, and equation~\eqref{eq:ode} to the ODE found in~\cite{LO}.
\end{remark}

\section{An Application to the Hyperbolic Space} \label{sec:applications}

We now use the technique~\eqref{eq:method} to obtain new examples of solitons of the mean curvature flow. For $n \geq 2$, consider the half-space model of hyperbolic space
    \[\H{n+1}=\left\{ x=(x_1,\ldots,x_{n} , x_{n+1})\in\R^{n+1} : x_1>0\right\}, \qquad g=\frac{1}{x_1^2}\langle \cdot , \cdot \rangle \, ,
    \]
    where $\langle \cdot , \cdot \rangle$ is the standard flat metric of Euclidean space. Consider the one-parameter family of isometries $\mathcal{G}_t:\mathbb{H}^{n+1}\to\mathbb{H}^{n+1}$, $t\in\mathbb{R}$, given by
    \begin{equation*}
        \mathcal{G}_{t} (x_1 , \dots , x_{n} , x_{n+1}) = (x_1 , \dots , x_{n-1} , \mathrm{Rot}_{t} \left(x_{n} , x_{n+1} \right)) \, , \\
    \end{equation*}
    where $\mathrm{Rot}_t$ denotes the rotation in $\R^2$ by an oriented angle $t$. The Killing vector field corresponding to $ \mathcal{G}$ via~\eqref{eq:def_K} is 
    $$K=-x_{n+1}\partial_n+x_{n}\partial_{n+1} \, . $$
    There is a warped product structure on $\mathbb{H}^{n+1} \setminus \{x_n = x_{n+1} = 0 \}$, namely 
    \begin{equation} \label{eq:iso}
       \mathbb{H}^{n+1} \setminus \{x_n = x_{n+1} = 0 \} \cong \check{\mathbb{H}}^n \times_{\varphi} S^1 \, ,
    \end{equation}
    where $\check{\mathbb{H}}^n = \left\{ x \in \mathbb{H}^n \colon x_{n} > 0 \right\}$ and $\varphi \colon \check{\mathbb{H}}^n \to (0,+\infty)$ is given by $x \mapsto \left( x_n / x_1 \right)^2$. The isometry~\eqref{eq:iso} is given by
    \begin{equation*}
    \left( (x_1, \dots, x_{n-1}, x_n), e^{i \theta} \right) \mapsto (x_1, \dots, x_{n-1}, x_n \cos(\theta), x_n \sin(\theta)) \, .
    \end{equation*}

    In the general theory, the second term in the warped product was a real interval, and now we have $S^1$. But smooth maps from an open interval to $S^1$ correspond to maps from that interval to $\R$, by standard covering theory.

    Now, take the submersion $\pi \colon \check{\mathbb{H}}^n \to (0, +\infty)$ given by $\pi(x) = x_n / x_1$. Note that $\varphi$ factors through $\pi$ as $\varphi = \widehat{\varphi} \circ \pi$, where $\widehat{\varphi}(s) = s^2$.  This submersion satisfies $g \left( \nabla \pi , \nabla \pi \right) = 1/\left(\alpha \circ \pi\right)$, where $\alpha \colon (0,+\infty) \to (0 , +\infty)$ is given by
    \begin{equation}\label{eq:alpha} 
    \alpha(s) = \frac{1}{1+s^2} \, ,   
    \end{equation}
    as shown in Appendix~\ref{appendix:mean_curvature}. Observe that $\pi$ has constant mean curvature fibres. Indeed, the mean curvature of the fibre $\pi^{-1 }(\{s\}) \subseteq \check{\mathbb{H}}^n$, $s>0$, with respect to the unit normal vector field $- \nabla \pi / \lVert \nabla \pi \rVert$ is given by the function $h \colon (0,+\infty) \to \R$,
     \begin{equation} \label{eq:mean_curvature}
         h(s) = (n-1) \frac{s}{\sqrt{1+s^2}} \, , 
     \end{equation} 
    which is derived in Appendix~\ref{appendix:mean_curvature}. Finally, putting everything together, ODE~\eqref{eq:ode} in this case reads 
    \begin{equation} \label{ode2}
        f''(s) = \frac{\big(\left(s^4+s^2\right) f'(s)^2+1\big) \big(s-\left(n s^2+1\right) f'(s)\big)-\left(2 s^2+1\right) f'(s)}{s(1+s^2)} \, . \\ 
    \end{equation}

We will show that there exists a one-parameter family of smooth complete $\mathcal{G}$-solitons on $\mathbb{H}^{n+1}$ that can be constructed from solutions to~\eqref{ode2}. These examples are new. In particular, they are different from the ones exhibited in~\cite{LRS}. In fact, our  $\mathcal{G}$-solitons on $\mathbb{H}^{n+1}$ can be constructed from solutions $f \colon (s_1 , s_2) \to \mathbb{R}$ to~\eqref{ode2}, which can be parametrised by
\[
\check{\mathbb{H}}^{n} \ni \left(y_1 , \dots , y_{n-1}, y_n \right) \mapsto \left( y_1 , \dots , y_{n-1}, y_n \cos\left( f(y_n/y_1) \right), y_n \sin\left( f(y_n/y_1) \right) \right) \in \mathbb{H}^{n+1} \, .
\]
Note that each subset $\mathcal{H}^{n-2}(c_1,c_2)=\{(y_1,\ldots,y_n)\in \mathbb{H}^{n} \mid y_1=c_1, \, y_n=c_2\}$, $c_1>0$ and $c_2\in\mathbb{R}$, is a $(n-2)$-dimensional horosphere. Indeed, $y_1=c_1$ defines a $n$-dimensional horosphere, and $y_n=c_2$ is a totally geodesic hyperplane. Thus, the intersection is a $(n-2)$-horosphere. And they are moved by this map to a $(n-2)$-horosphere in $\mathbb{H}^{n+1}$. Thus, all these $n$-dimensional examples are foliated by $(n-2)$-horospheres.
Defining $u = y_1$, $t_i = y_i / y_1$ for $i = 2 , \dots , n-1$ and $v = y_n / y_1$, we get a reparametrisation
\[
\check{\mathbb{H}}^{n} \ni \left(u , t_2 , \dots , t_{n-1}, v \right) \mapsto u \left( 1 , t_2 , \dots , t_{n-1}, v \cos\left( f(v) \right), v \sin\left( f(v) \right) \right) \in \mathbb{H}^{n+1} \, .
\]

Hence, the soliton is determined by the curve $v \mapsto \left( v \cos\left( f(v) \right), v \sin\left( f(v) \right) \right)$. 

To study~\eqref{ode2}, we define $w=f'$, $p(s)=s^3+s$, and $q(s,x)=\big((s^4+s^2)x^2+1\big)\big(s-(ns^2+1)x\big)-(2s^2+1)x$. In this way,~\eqref{ode2} is equivalent to
\begin{equation}\label{18bis} f'=w, \quad w'(s)=\frac{q(s,w(s))}{p(s)}. 
\end{equation}
Note that $p(0)=0$ and $q(0,0)=0$. 

Firstly, we show that solutions to~\eqref{18bis} can always be extended to infinity. 

\begin{lemma} \label{extendtoinfinity} Let $f\colon(s_0,s_1)\to \R$, $0<s_0<s_1$, be a solution to~\eqref{ode2}. Then, it can be extended (as a solution) to $f\colon (s_0,+\infty) \to \R$.
\end{lemma}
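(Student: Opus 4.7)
\emph{Plan.} Rewrite~\eqref{ode2} as a first-order system in $(f,w)$ with $w=f'$, i.e.\ $f'=w$, $w'=q(s,w)/p(s)$. The right-hand side is smooth on $(0,+\infty)\times\mathbb{R}$ and, crucially, does not depend on $f$ itself, so once $w$ is controlled $f$ is recovered by integration and stays bounded on bounded intervals. The standard ODE continuation theorem reduces the lemma to establishing an a priori bound: on every compact subinterval $[a,S]\subset(s_0,+\infty)$, the function $w$ stays bounded. I would fix any $a\in(s_0,s_1)$ and argue separately for the upper and lower bound.

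For the upper bound, I would use a barrier argument based on the negative cubic term $-(s^4+s^2)(ns^2+1)w^3$ of $q(s,w)$. Put $C(S):=\sup_{s\in[a,S]}\frac{s}{ns^2+1}$ and $M:=\max(w(a),C(S))+1$. Whenever $w(s)\geq M$, one has $s-(ns^2+1)w(s)<0$, while $(s^4+s^2)w^2+1>0$ and $-(2s^2+1)w<0$, so every summand of $q(s,w)$ is negative and therefore $w'(s)<0$. A standard first-exit argument then gives $w(s)\leq M$ on $[a,S]$: if $T\in(a,S]$ were the first time with $w(T)=M$, the strict inequality $w'(T)<0$ would force $w(T-\epsilon)>M$ for small $\epsilon>0$, contradicting the choice of $T$.

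For the lower bound, I would observe that when $w(s)<0$ every factor in $q$ is positive: $(s^4+s^2)w^2+1>0$, $s-(ns^2+1)w\geq s>0$, and $-(2s^2+1)w>0$. Hence $w'(s)>0$ on $\{w<0\}$, so $w$ is increasing whenever it is negative, which gives $w(s)\geq\min(w(a),0)$ on the interval of existence within $[a,S]$. Combined with the upper bound, this yields uniform control of $|w|$ on $[a,S]$; integrating $f'=w$ controls $f$, and the continuation theorem extends the solution to $[a,S]$. Since $S$ is arbitrary, the solution extends to $(s_0,+\infty)$.

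The main obstacle would have been a potential finite-time blow-up of $w$, since schematically $w'\sim -s(ns^2+1)w^3$ at large $|w|$, which \emph{a priori} could also concentrate. The barrier argument sidesteps this by exploiting the fact that this cubic coefficient has the correct sign to push $w$ back toward zero; once this sign is identified, the problem reduces to a routine invariance-of-region argument and no delicate growth estimate or analysis at $s=0$ is needed, as we remain strictly to the right of the singularity of $p(s)=s(1+s^2)$ at the origin.
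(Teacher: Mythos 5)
Your proof is correct, and it rests on the same key observation as the paper's: for $s$ in a fixed compact interval, the cubic-in-$w$ structure of $q(s,w)$ forces $q<0$ once $w$ is large positive and $q>0$ once $w$ is negative, so the vector field pushes $w$ back towards a bounded region and no finite-time blow-up can occur. The execution differs in packaging: the paper argues by contradiction, assuming $w(s)\to\pm\infty$ as $s\to s_1^-$ and extracting sequences $r_n\nearrow s_1$ on which $w'(r_n)$ has the wrong sign, whereas you prove explicit two-sided a priori bounds ($\min(w(a),0)\le w\le M$ with $M$ built from $\sup_{[a,S]} s/(ns^2+1)$) via a first-exit/invariant-region argument and then invoke the continuation theorem. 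Your version is arguably tidier: it avoids the paper's implicit dichotomy that non-extendability forces $w\to+\infty$ or $w\to-\infty$ (rather than merely $\limsup|w|=+\infty$), and it does not need the intermediate claim that $w'(s)\to\pm\infty$. Both arguments correctly use that $f$ does not appear on the right-hand side, so bounding $w$ suffices.
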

\begin{proof} If we recall~\eqref{18bis}, it is enough to show that a solution $w$ to the second equation can be extended to $(s_0,+\infty)$. A simple computation shows that
\[q(s,x)= -s^2(s^2+1)(ns^2+1)x^3 +s^3(s^2+1)x^2-((n+2)s^2+2)x+s.
\]
It is easy to see that for fixed $s>0$, $q(s,x)\to -\infty$ when $x\to +\infty$ and $q(s,x)\to +\infty$ when $x\to -\infty$. Then, there exist $x^+(s)>0$ and $x^-(s)<0$ such that $q(s,x)\le -1$ for any $x\ge x^+(s)$ and $q(s,x)\ge 1$ for any $x\le x^-(s)$.

Now, given a solution $w:(s_0,s_1)\to\R$, assume by contradiction that $s_1$ is a finite-time blow-up for $w$. Let
\begin{align*}
x^+ &:= \sup\{x^+(s) \mid s\in [(s_0+s_1)/2, s_1]\},\\
x^- &:= \inf\{x^-(s) \mid s\in [(s_0+s_1)/2, s_1]\},
\end{align*}
which are finite because $q$ is a polynomial and $[(s_0+s_1)/2, s_1]$ is compact. There are two cases.

First, $w(s)\to+\infty$ when $s\to s_1$, $s<s_1$. In particular, $w'(s)\to +\infty$ when $s\to s_1$. Then, there exists a sequence $\{r_n\}\subset [(s_0+s_1)/2,s_1)$ such that $\{r_n\}\nearrow s_1$ and $w(r_n)\to+\infty$. Therefore, there exists $m\in\mathbb{N}$ such that, for all $n\ge m$, $w(r_n)> x^+$, which implies $w'(r_n)=q(r_n,w(r_n))/p(r_n)<-1/p(r_n)<0$. This is a contradiction. 

Second, $w(s)\to-\infty$ when $s\to s_1$, $s<s_1$. In particular, $w'(s)\to -\infty$ when $s\to s_1$. 
Then, there exists a sequence $\{r_n\}\subset [(s_0+s_1)/2,s_1)$ such that $\{r_n\}\nearrow s_1$ and $w(r_n)\to-\infty$. Therefore, for some $m\in\mathbb{N}$, if $n\ge m$, then $w(r_n)< x^-$, which implies $w'(r_n)=q(r_n,w(r_n))/p(r_n)>1/p(r_n)>0$ for any $n\ge m$.  This is a contradiction. 
\end{proof}
\begin{lemma}\label{ff2} Given $f_0\in\R$, there exists a unique solution to ~\eqref{18bis}, $\ff^{(1)}\in \mathcal{C}^{\infty}[0,+\infty)$, such that $\ff^{(1)}(0)=f_0$ and $\big(\ff^{(1)}\big)'(0)=0$.
\end{lemma}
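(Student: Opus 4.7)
System~\eqref{18bis} is singular at the initial point $s=0$ since $p(s)=s(1+s^2)$ vanishes there, so Picard--Lindel\"of does not apply directly. First I would identify the forced value of $w'(0)$ via the expansion
\[
q(s,x) = s - 2x + \mathcal{O}(s^2, sx, x^2, x^3),
\]
which follows from the explicit polynomial form of $q$. Passing to the limit $s \to 0^+$ in $s(1+s^2)\,w'(s) = q(s, w(s))$ with $w(0)=0$ gives $w'(0) = 1 - 2\,w'(0)$, hence $w'(0) = 1/3$. This motivates the substitution $w(s) = s\,y(s)$ with target $y(0) = 1/3$.

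Since every term of $q(s, sy)$ has a factor of $s$, a short computation gives $q(s, sy) = s\,\widetilde{q}(s, y)$ with
\[
\widetilde{q}(s,y) = -s^4(s^2+1)(ns^2+1)\,y^3 + s^4(s^2+1)\,y^2 - ((n+2)s^2+2)\,y + 1,
\]
so~\eqref{18bis} reduces to the scalar singular ODE
\[
s\,y'(s) = G(s, y(s)), \qquad G(s,y) := -y + \frac{\widetilde{q}(s,y)}{1+s^2}.
\]
The function $G$ is polynomial in $y$ and analytic in $s$, and one checks $G(0, 1/3) = 0$ together with $\partial_y G(0, 1/3) = -3$. Since the eigenvalue $-3$ is not a positive integer, the classical Briot--Bouquet theorem produces a unique analytic solution $y$ on some interval $[0, \delta)$ with $y(0) = 1/3$. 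If a self-contained argument is preferred, one can instead put $\eta := y - 1/3$ and run a contraction mapping in $\mathcal{C}^0([0, \delta])$ on the integral reformulation of the equation, then bootstrap smoothness by successive differentiation.

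Given such a $y$, I would set $\mathbf{f}^{(1)}(s) := f_0 + \int_0^s \tau\,y(\tau)\,d\tau$ on $[0, \delta)$. Then $\mathbf{f}^{(1)}(0)=f_0$, $(\mathbf{f}^{(1)})'(s) = s\,y(s) = w(s)$ is smooth with $(\mathbf{f}^{(1)})'(0) = 0$, and $\mathbf{f}^{(1)}$ solves~\eqref{ode2} on $(0, \delta)$ by construction. For $s > 0$ the equation is regular, so $\mathbf{f}^{(1)}$ continues to a maximal interval $[0, s_*)$ by standard Picard--Lindel\"of; Lemma~\ref{extendtoinfinity} then forces $s_* = +\infty$. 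Uniqueness on $[0, +\infty)$ combines Briot--Bouquet uniqueness near $0$ with Picard uniqueness on $(0, +\infty)$.

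The main obstacle is the singular IVP at $s=0$: the slope $w'(0) = 1/3$ is not free initial data but is imposed by the equation itself, and extracting a smooth solution compatible with this constraint is precisely what Briot--Bouquet (or the fixed-point argument) accomplishes. Once the local smooth solution is in hand, Lemma~\ref{extendtoinfinity} does the rest.
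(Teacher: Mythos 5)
Your proposal is correct, but it takes a genuinely different route from the paper's. The paper handles the singular initial value problem $w'=q(s,w)/p(s)$, $w(0)=0$, by passing to the planar vector field $X(s,x)=(p(s),q(s,x))$: the origin is a hyperbolic saddle (eigenvalues $p'(0)=1$ and $\tfrac{\partial q}{\partial x}(0,0)=-2$ of opposite sign), and Lemmas~\ref{integralcurves} and~\ref{boundarysol} extract the unique local solution as the invariant curve through $(0,0)$ that projects diffeomorphically onto the $s$-axis. You instead desingularise the scalar equation directly: the blow-up $w=sy$ yields a Briot--Bouquet equation $sy'=G(s,y)$ with $G(0,1/3)=0$ and $\partial_y G(0,1/3)=-3$, and the classical theorem gives the unique analytic germ with $y(0)=1/3$; both arguments then conclude identically via Lemma~\ref{extendtoinfinity}. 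Your observation that the equation forces $w'(0)=\big(\ff^{(1)}\big)''(0)=1/3$ is a genuine addition (the paper's displayed condition ``$w'(0)=0$'' is evidently a typo for $w(0)=0$). The invariant-manifold route reuses machinery the paper needs anyway and gives a phase portrait of nearby trajectories; your route is more self-contained at the ODE level and yields analyticity at $s=0$ rather than mere smoothness. One point to tighten: Briot--Bouquet, as usually stated, gives uniqueness within the class of analytic solutions, whereas the lemma asserts uniqueness among $\mathcal{C}^{\infty}$ solutions. Since $\partial_y G(0,1/3)=-3<0$, this is easily closed: for any two continuous solutions $y_1,y_2$ on $(0,\delta)$ with the same limit $1/3$ at $0$, one has $\frac{d}{ds}(y_1-y_2)^2=\frac{2}{s}\,\partial_y G(s,\xi)\,(y_1-y_2)^2\le 0$ near $0$, so $(y_1-y_2)^2$ is non-increasing from the value $0$ and hence vanishes; an analogous step is implicit in the paper's stable-manifold uniqueness as well.
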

\begin{proof} Once we solve the problem $w'(s)=q(s,w(s))/p(s)$ with $w'(0)=0$, we just need to take $\ff^{(1)}(s)=\int_0^s w(\xi)d\xi+f_0$. We now consider the dynamical system $X:\R^2\to\R^2$, $X(s,x)=(p(s),q(s,x))$. Note that $X(0,0)=(0,0)$, $p'(0)=1$ and $\frac{\partial q}{\partial x}(0,0)=-2$, and apply Lemma~\ref{boundarysol}. Observe that the functions $p$ and $q$ are of class $\mathcal{C}^{\infty}$, and hence $\ff^{(1)}$ will also be of class $\mathcal{C}^{\infty}$ on a suitable interval. Finally, apply Lemma~\ref{extendtoinfinity}. 
\end{proof} 

The solution $\ff^{(1)}$ of Lemma~\ref{ff2}, when glued with its rotation by an angle $\pi$, provides a new complete smooth $\mathcal{G}$-soliton on $\mathbb{H}^{n+1}$, which can be seen for $n=2$ in Figures~\ref{figure:curve_bowl} and~\ref{figure:soliton_bowl}. If we denote by $F$ the embedding of this soliton into $\mathbb{H}^{n+1}$, then, by construction, its rotation $\{\mathcal{G}_{t} \circ F\}_{t \in \mathbb{R}}$ evolves by mean curvature flow. 

\begin{figure}[ht]
\centering\includegraphics[scale=0.6]{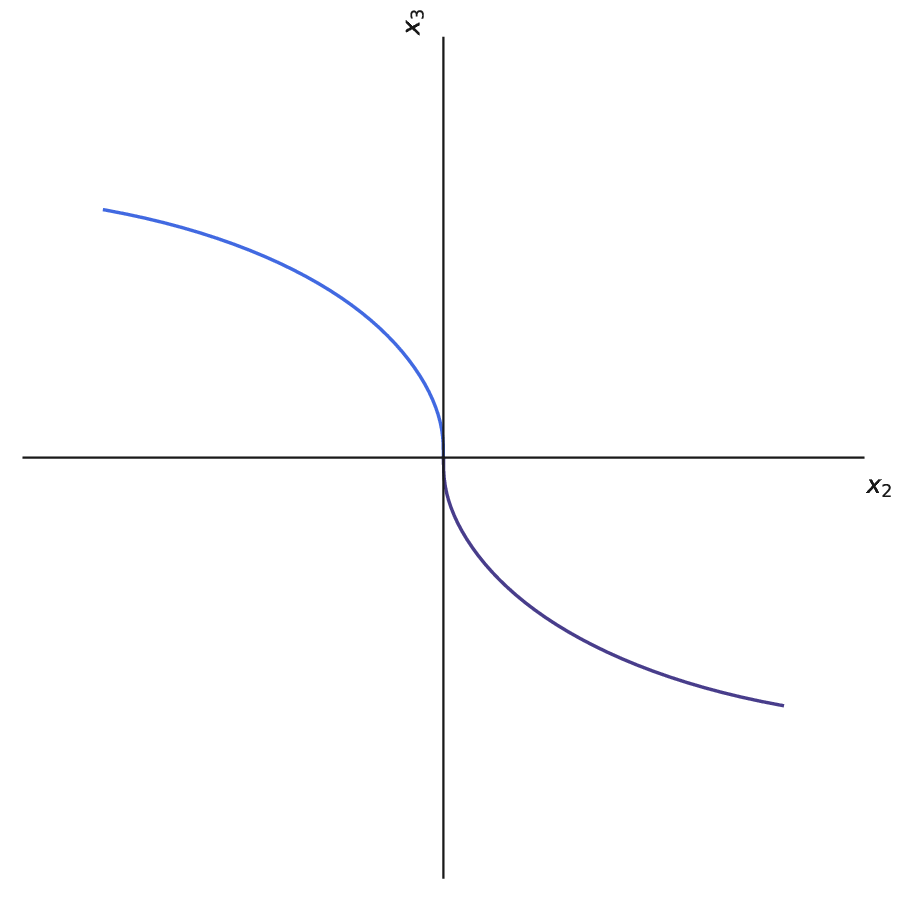}
\caption{Intersection of our new $\mathcal{G}$-soliton on $\mathbb{H}^3$ with the plane $x_1 = 1$. }\label{figure:curve_bowl}
\end{figure}

\begin{figure}[ht]
\centering\includegraphics[scale=0.8]{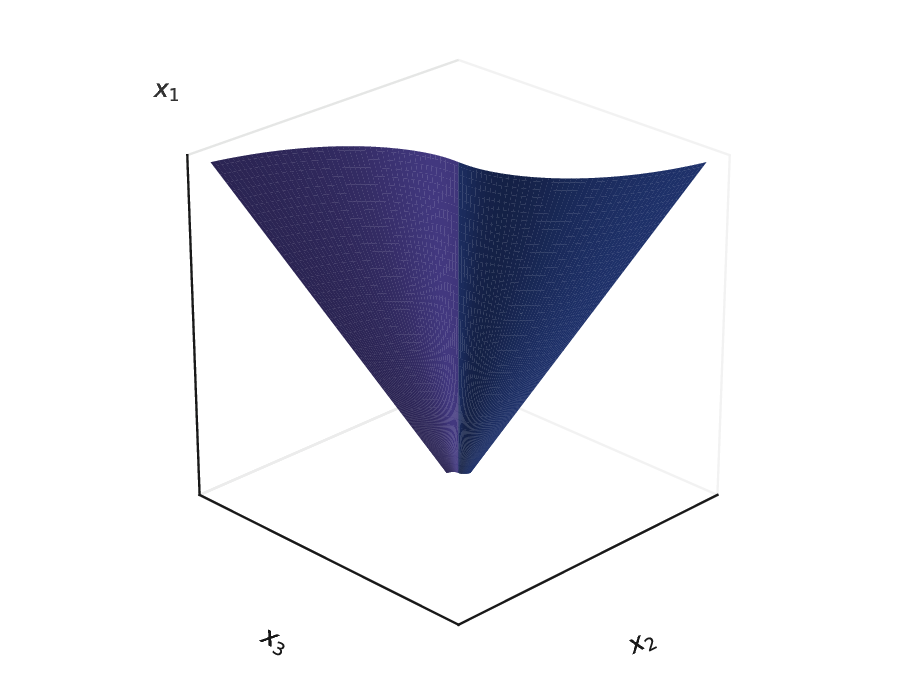}
\caption{A new complete $\mathcal{G}$-soliton on $\mathbb{H}^3$. }\label{figure:soliton_bowl}
\end{figure}

\begin{lemma}\label{wing-likesol} Given $s_0\in (0,+\infty)$, $r_0\in\R$, there exist $\ff_{\pm}^{(2)}\in \mathcal{C}^0[s_0,+\infty)\cap \mathcal{C}^{\infty}(s_0, +\infty)$ which are solutions to~\eqref{ode2} such that $\ff^{(2)}_{+}(s_0)=\ff^{(2)}_{-}(s_0)=r_0$, and $\lim_{s\to s_0}\big(\ff_{\pm}^{(2)}\big)'(s)=\pm\infty$.
\end{lemma}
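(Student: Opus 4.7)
The plan is to interchange the roles of dependent and independent variables: the singular behaviour $f'(s) \to \pm\infty$ as $s \to s_0^+$ corresponds to the regular condition $g'(r_0) = 0$ for the local inverse $g = f^{-1}$. I would first construct a single smooth $g$ solving a transformed ODE with initial data $g(r_0) = s_0$, $g'(r_0) = 0$, and then recover $\ff_+^{(2)}$ and $\ff_-^{(2)}$ separately by inverting the two monotone branches of $g$ issuing from its minimum at $r_0$.

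To derive the ODE for $g$, I would substitute the identities $f'(g(r)) = 1/g'(r)$ and $f''(g(r)) = -g''(r)/g'(r)^3$ into~\eqref{ode2}. Using $s^4 + s^2 = s^2(1+s^2)$ to cancel the factor $s(1+s^2)$ in the denominator against factors of the numerator, a direct computation yields $g''(r) = F(g(r), g'(r))$, where
\[
F(g, g') = g(n g^2 + 1) - g^2 g' - \frac{(g')^3}{1+g^2} + \frac{((n+2) g^2 + 2)(g')^2}{g(1+g^2)}.
\]
This $F$ is smooth on $\{g > 0\} \times \R$ and satisfies $F(s_0, 0) = s_0(n s_0^2 + 1) > 0$. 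Picard-Lindel\"of, applied to the first-order system $u' = v$, $v' = F(u, v)$ with data $(s_0, 0)$ at $r_0$, delivers a unique smooth solution $g$ on some interval $(r_0 - \delta, r_0 + \delta)$. Because $g''(r_0) > 0$, after shrinking $\delta$ I may assume that $g$ is strictly decreasing on $(r_0 - \delta, r_0]$, strictly increasing on $[r_0, r_0 + \delta)$, and attains its strict minimum $s_0$ at $r_0$.

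Next, I would define $\ff_+^{(2)}$ and $\ff_-^{(2)}$ on a common interval $[s_0, s_0 + \epsilon)$, for some small $\epsilon > 0$, by inverting the increasing and decreasing branches of $g$, respectively, and setting $\ff_\pm^{(2)}(s_0) = r_0$. Both are continuous at $s_0$, smooth on $(s_0, s_0 + \epsilon)$, and satisfy $(\ff_\pm^{(2)})'(s) = 1/g'(\ff_\pm^{(2)}(s)) \to \pm\infty$ as $s \to s_0^+$; reversing the algebra used to derive $F$ confirms that they solve~\eqref{ode2}. To conclude, Lemma~\ref{extendtoinfinity}, applied to the restriction of each $\ff_\pm^{(2)}$ to $(s_1, s_0 + \epsilon)$ for any fixed $s_1 \in (s_0, s_0 + \epsilon)$, extends them to smooth solutions on $(s_0, +\infty)$; uniqueness of the maximal extension then ensures consistency with the boundary value $r_0$ at $s_0$.

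The main obstacle is the derivation of $F$: a priori, the substitution introduces several terms with negative powers of $g'$, and it is essential that these collectively assemble into a function which is genuinely smooth at $g' = 0$ (rather than merely possessing a finite limit there). Once $F$ is written in the form above and the positivity $F(s_0, 0) > 0$ is verified, the remainder is standard ODE theory combined with the previously established extension lemma.
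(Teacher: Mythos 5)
Your proposal is correct and follows essentially the same route as the paper: pass to the local inverse $\gamma=f^{-1}$, observe that the transformed equation is regular at $\gamma'=0$ with $\gamma''(r_0)=s_0(ns_0^2+1)>0$, and invert the two monotone branches around the resulting strict minimum (your expression for $F$ agrees with the paper's equation~\eqref{inverse-f} after clearing the denominator $\gamma^3+\gamma$). You are in fact slightly more complete than the paper, which stops at a small interval $[s_0,s_0+\epsilon)$ and leaves the extension to $[s_0,+\infty)$ via Lemma~\ref{extendtoinfinity} implicit.
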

\begin{proof} We use the standard technique of the inverse map. Indeed, assume that $f$ is a injective solution to~\eqref{ode2}, and let $\gamma$ be a local inverse. Then, given $r$ in some interval, it holds $r=f(\gamma(r))$, so that $1=f'(\gamma)\gamma'$, that is $0=f''(\gamma)(\gamma')^2+f'(\gamma)\gamma''$. From here, using $f'(\gamma)=1/\gamma'$, we obtain
\begin{equation}\label{inverse-f}
\gamma''(r)=\frac{(2\gamma(r)^2+1)\gamma'(r)^2-\big(\gamma(r)^4+\gamma(r)^2+\gamma'(r)^2\big)\big(\gamma(r)\gamma'(r)-n\gamma(r)^2-1\big)}{\gamma(r)^3+\gamma(r)}.
\end{equation}
Now, given $r_0=f(s_0)$, $\gamma(r_0)=s_0$, we impose $\gamma'(r_0)=0$. Inserting them into~\eqref{inverse-f}, we obtain
\[ \gamma''(r_0)= s_0(n s_0+1)>0.\] 
This means that $\gamma$ is convex in a small neighbourhood of $r_0$, say $(r_0-\rho_0,r_0+\rho_0)$ for some $\rho_0>0$. The restrictions of $\gamma$ to the intervals $(r_0-\rho_0,r_0]$ and $[r_0,r_0+\rho_0)$ are injective, and then they provide two inverse functions $\ff_{\pm}^{(2)}:[0,\delta)\to\R$ as desired, since $\lim_{s\to s_0}\big(\ff_{\pm}^{(2)}\big)'(s)=\lim_{r\to r_0}\frac{1}{\gamma'(r)}=\pm\infty$.
\end{proof}
Consider two solutions $\mathbf{f}^{(2)}_{\pm}$ as in Lemma~\ref{wing-likesol}. Similarly to the wing-like solitons obtained in~\cite{CSS}, the union of the two associated $\mathcal{G}$-solitons becomes a single smooth hypersurface in $\mathbb{H}^{n+1}$, which we will call a \emph{wing-like}-$\mathcal{G}$-soliton -- see Figures~\ref{figure:curve_winglike} and~\ref{figure:soliton_winglike}. 

\begin{figure}[ht]
\centering\includegraphics[scale=0.6]{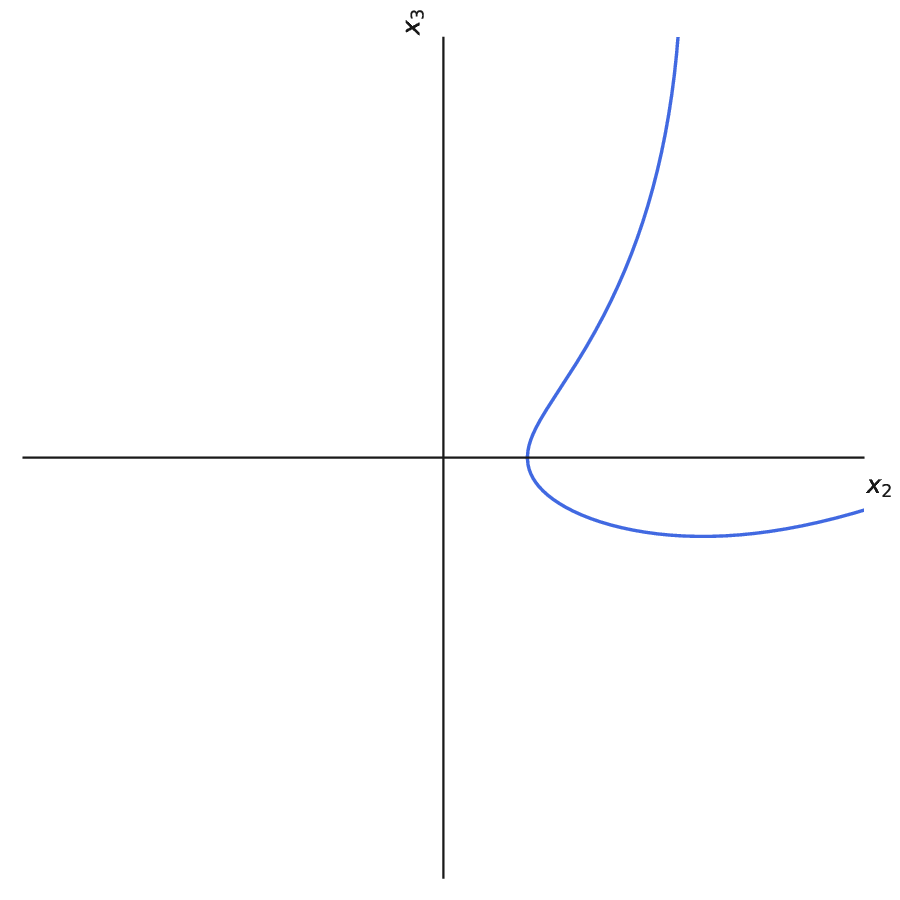}
\caption{Intersection of a wing-like $\mathcal{G}$-soliton with the plane $x_1 = 1$. }\label{figure:curve_winglike}
\end{figure}

\begin{figure}[ht]
\centering\includegraphics[scale=0.8]{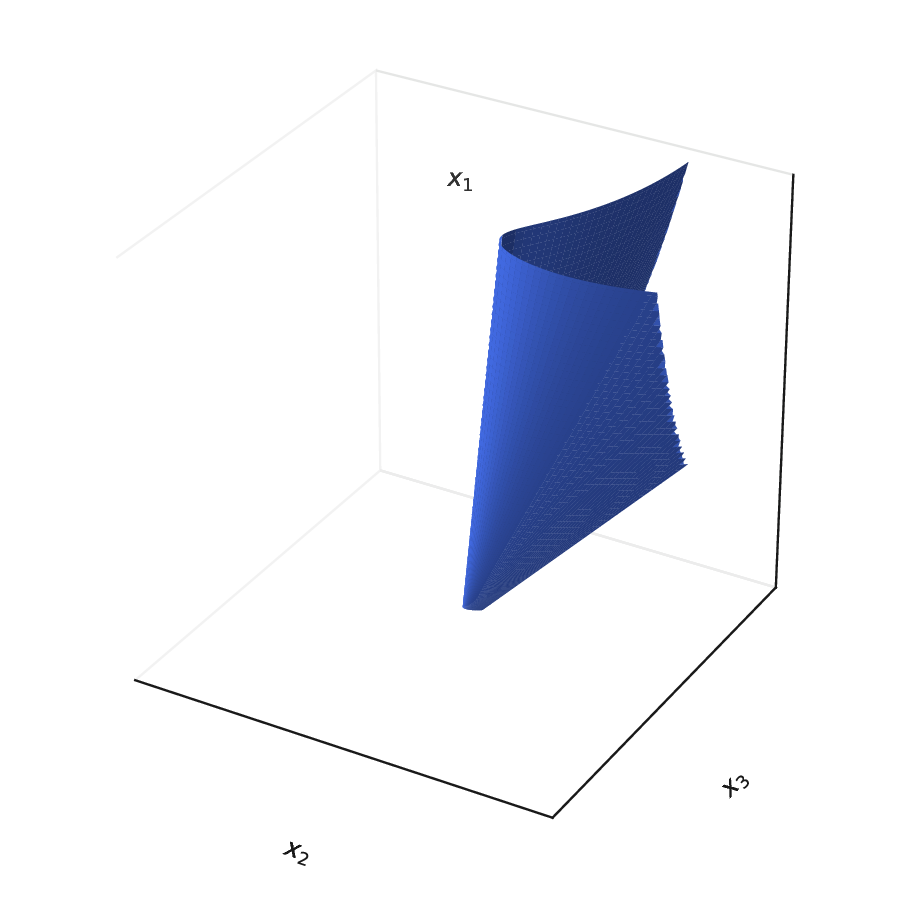}
\caption{A complete wing-like $\mathcal{G}$-soliton on $\mathbb{H}^3$. }\label{figure:soliton_winglike}
\end{figure}

\begin{remark}
It is relatively straightforward to show that, apart from the Killing vector field induced by rotations we used in this section, there are exactly two other Killing vector fields with integrable horizontal distribution in $\mathbb{H}^n$, namely the ones associated via~\eqref{eq:def_K} to the following one-parameter families of isometries:
    \begin{align*}
        &\mathcal{G}^{(1)}_{t} (x_1 , \dots , x_{n} , x_{n+1}) = (x_1 , \dots , x_{n} , x_{n+1} + t) \, , \\
        &\mathcal{G}^{(2)}_{t} (x_1 , \dots , x_{n} , x_{n+1}) = e^t \, (x_1 , \dots , x_{n} , x_{n+1}) \, ,
    \end{align*}
    The Killing vector fields corresponding to these isometries (respectively translations and dilations) are given by
    $$K^{(1)}=\partial_{n+1}  \, \quad K^{(2)} = \sum_{i=1}^{n}x_i \partial_i \, .  $$
    Each of these Killing vector fields induces a warped product structure on a certain open subset $N^{(i)} \subseteq \mathbb{H}^{n+1}$:
    \begin{equation} \label{eq:iso2}
        N^{(i)} \cong M^{(i)} \times_{c^{(i)}} I^{(i)} \, ,
    \end{equation}
    where $\left( M^{(i)} , g_{M^{(i)}} \right)$ is an oriented $n$-dimensional Riemannian manifold, $I^{(i)} \subseteq \R$ is an open interval and $\varphi^{(i)} \colon M^{(i)} \to (0,+\infty)$ is smooth. More specifically, we can take
    \begin{align*}
        &M^{(1)} = \mathbb{H}^{n} \, , \quad I^{(1)} = \R \, , \quad c^{(1)} (x) = \frac{1}{x_1^2} \, , \\
        &M^{(2)} = \mathbb{S}_+^n \setminus \{ (1,0, \dots ,0) \}  \, , \quad I^{(2)} = (0,+\infty)  \, , \quad c^{(2)} (x) = \frac{1}{x_1^2} \, ,
    \end{align*}
    where $\check{\mathbb{H}}^n = \left\{ x \in \mathbb{H}^n \colon x_{n} > 0 \right\}$ and $\mathbb{S}^n_+ = \left\{ x \in \mathbb{H}^{n+1} \colon \sum_{i=1}^{n+1} x_i^2 = 1 \right\}$, both equipped with the induced metrics from hyperbolic space. In fact, $\mathbb{S}^n_+$ is a totally geodesic hyperplane. The isometries~\eqref{eq:iso2} are given by
    \begin{align*}
    &M^{(1)} \times I^{(1)} \to  N^{(1)} \, , &\left( (x_1, \dots, x_n), x_{n+1} \right) &\mapsto (x_1, \dots, x_{n+1}) \, ; \\
    &M^{(2)} \times I^{(2)}  \to N^{(2)} \, , &\left( (x_1, \dots, x_{n+1}), r \right) &\mapsto r (x_1, \dots, x_{n+1}) \, .
\end{align*}
In each case, there exists a submersion $\pi^{(i)} \colon M^{(i)} \to J^{(i)}$, where $J^{(i)} \subseteq \R$ is an interval, through which $\varphi^{(i)}$ factors as $\varphi^{(i)} = \widehat{\varphi}^{(i)} \circ \pi^{(i)}$. This submersion is unique up to composition by diffeomorphisms $J^{(i)} \to \widetilde{J}^{(i)} \subseteq \R$. We can take
    \begin{align*}
        &J^{(1)} = (0,+\infty) \, , \quad \pi^{(1)}(x) = x_1 \, , \quad \widehat{\varphi}^{(1)} (s) = \frac{1}{s^2} \, , \\
        &J^{(2)} = (0,1) \, , \quad \pi^{(2)}(x) = x_1 \, , \quad \widehat{\varphi}^{(2)} (s) = \frac{1}{s^2} \, .\\
    \end{align*}
    These submersions satisfy $g_{M^{(i)}} \left( \nabla \pi^{(i)} , \nabla \pi^{(i)} \right) = 1/\left(\alpha^{(i)} \circ \pi^{(i)}\right)$, where the $\alpha^{(i)} \colon J^{(i)} \to (0 , +\infty)$ are easily verified to be given by
    \[ \alpha^{(1)}(s) = \frac{1}{s^2} \, , \quad \alpha^{(2)}(s) = \frac{1}{1-s^2} \, .
    \]
    Each $\pi^{(i)}$ has constant mean curvature fibres. Using similar calculations to the ones in Appendix~\ref{appendix:mean_curvature}, it is possible to compute the mean curvature of the fibre $\left(\pi^{(i)}\right)^{-1}(\{s\})\subseteq M^{(i)}$, $s\in J^{(i)}$, with respect to the unit normal vector field $- \nabla \pi^{(i)} / \lVert \nabla \pi^{(i)} \rVert$, which  is given by the function $h^{(i)} \colon J^{(i)} \to \R$,
     \[ h^{(1)}(s) = -(n-1) \, , 
     \quad h^{(2)}(s) = -2(n-1) \frac{s}{\sqrt{1-s^2}} \, . \]
    Finally, putting everything together, the ODE~\eqref{eq:ode} in these cases reads, respectively, 
    \begin{align*}
        f''(s) =& \frac{\left(f'(s)^2+1\right) \left(n s f'(s)+1\right)}{s^2} \, , \\  \label{ode3}
        f''(s) =& \frac{\big(s^2-\left(s^2-1\right) f'(s)^2\big) \big(\left((2 n-3) s^2+1\right) f'(s)+s\big)+s^2 f'(s)}{s^3 \left(1-s^2\right)}.
    \end{align*}
Straightforward computations show that the ODE for the $\mathcal{G}^{(1)}$-solitons coincides with the one for the usual rotationally invariant translating solitons in $\R^{n+1}$ after the change of variable $t = 1/s$, and that we hence recover the rotationally invariant translating solitons studied in~\cite{MROSHS}. Another recent study of $\mathcal{G}^{(1)}$-solitons is~\cite{BL24}. 

To the knowledge of the authors, the only paper about $\mathcal{G}^{(2)}$-solitons is~\cite{LRS}, which deals with solitons in $\mathbb{H}^3$ using different methods. For the $\mathcal{G}^{(2)}$-solitons described here, we can employ the same technique as before: similarly to Lemma~\ref{ff2} and Lemma~\ref{extendtoinfinity}, we can construct a bowl solution and extend it by analysing an appropriate cubic polynomial and the critical points of the corresponding ODE solutions. It is also straightforward to establish an analogue of Lemma~\ref{wing-likesol}, yielding wing-like solitons. However, since this primarily involves simple ODE-solving and straightforward computations that follow the same method as before, we omit the details here, as they do not add significant insight. 
\end{remark}

\FloatBarrier

\appendix

\section{Some Technical Lemmas} 

In this appendix, we prove some technical results that allow us to obtain solitons in hyperbolic spaces in Section~\ref{sec:applications}. Let $I_0$, $I_1$ be two open intervals,  and let $p\in \mathcal{C}^1(I_0)$, $q\in \mathcal{C}^1(I_0\times I_1)$. We are interested in the solutions to the following ODE:
\begin{equation}\label{general-ode}
v'(s)=\frac{q(s,v(s))}{p(s)}.
\end{equation}
Define the vector field $X:I_0\times I_1\to\R^2$, $X(s,x)=(p(s),q(s,x))$.
The following result relates solutions to~\eqref{general-ode} and integral curves of $X$. In particular, we obtain a solution to~\eqref{general-ode} from almost any integral curve of $X$. 
\begin{lemma} \label{integralcurves}
\begin{enumerate}
\item Let $J_0$ be an open interval and $\beta:J_0 \to \R^2$, $\beta(r)=(s(r),x(r))$, an  integral curve of $X$ such that $s$ is a diffeomorphism onto its image. Then, $v:=x\circ s^{-1}$, is a solution to~\eqref{general-ode}. 
\item Given a solution $v$ to~\eqref{general-ode} defined on an open interval, there exists a smooth function $s=s(r)$ defined on some open interval such that $\beta(r)=(s(r),v(s(r)))$ is an integral curve of $X$.
\end{enumerate}
\end{lemma}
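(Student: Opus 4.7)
The plan is to treat the two parts essentially as inverse constructions, both hinging on the chain rule together with the observation that the hypothesis ``$s$ is a diffeomorphism'' forces $p$ to be nonvanishing along the relevant set.

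For part (1), I would unpack the definition of an integral curve to get the pair of relations $s'(r) = p(s(r))$ and $x'(r) = q(s(r), x(r))$. Since $s \colon J_0 \to s(J_0)$ is a diffeomorphism, $s'$ is never zero, so $p$ is nowhere zero on $s(J_0)$, and $s^{-1}$ is smooth with $(s^{-1})'(\sigma) = 1/p(\sigma)$. Differentiating $v = x \circ s^{-1}$ and substituting gives $v'(\sigma) = x'(s^{-1}(\sigma))/p(\sigma) = q(\sigma, x(s^{-1}(\sigma)))/p(\sigma) = q(\sigma, v(\sigma))/p(\sigma)$, which is exactly~\eqref{general-ode}. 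This is a routine chain-rule calculation and should occupy only a few lines.

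For part (2), I would proceed in the opposite direction. Let $v$ be a solution to~\eqref{general-ode} on some open interval $(a,b) \subseteq I_0$; note that the very form of~\eqref{general-ode} forces $p(s) \neq 0$ on $(a,b)$. Fix $s_0 \in (a,b)$ and $r_0 \in \R$, and solve the autonomous ODE
\[
s'(r) = p(s(r)), \qquad s(r_0) = s_0,
\]
on a small interval about $r_0$ by the standard Picard--Lindel\"of theorem; since $p(s_0) \neq 0$, continuity of $p$ guarantees $s'$ stays nonzero, so (shrinking if necessary) $s$ is a diffeomorphism onto its image, which stays inside $(a,b)$. Then set $\beta(r) := (s(r), v(s(r)))$. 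By construction the first component satisfies $s'(r) = p(s(r))$, and for the second component the chain rule and~\eqref{general-ode} give
\[
\frac{d}{dr}\bigl[v(s(r))\bigr] = v'(s(r))\, s'(r) = \frac{q(s(r), v(s(r)))}{p(s(r))} \cdot p(s(r)) = q(s(r), v(s(r))),
\]
so $\beta$ is an integral curve of $X$.

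There is essentially no real obstacle; the only point that demands a moment of care is justifying that the $s$ in part~(2) is (locally) a diffeomorphism, which is immediate from $p(s_0) \neq 0$ and continuity. Everything else is chain rule together with the local existence theorem for the scalar ODE $s' = p(s)$.
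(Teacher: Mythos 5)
Your proposal is correct and follows essentially the same route as the paper: part (1) is the same chain-rule computation using $s'(s^{-1}(y))=p(y)$, and part (2) likewise constructs $s$ by solving $s'=p(s)$ and verifies $\beta'=X(\beta)$ directly. The only difference is that you make explicit a couple of points the paper leaves implicit (that $p\neq 0$ on the domain of $v$, and local existence for $s'=p(s)$), which is fine.
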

\begin{proof} Since $\beta$ is an integral curve of $X$, 
\begin{align*}s'(r)=p(s(r)), \quad
x'(r)=q(s(r),x(r)).
\end{align*}
Since $s$ is a diffeomorphism onto $J_0' := s(J_0)$, we have that $s'(s^{-1}(y))=p(y)$ for any $y\in J_0'$.
Therefore,
\begin{align*}
x'(s^{-1}(y)) & = q\big(s((s^{-1}(y)),x(s^{-1}(y)\big)=q(y,v(y)). \end{align*}
Next,
\begin{align*}
v'(y)=&\frac{x'(s^{-1}(y))}{s'(s^{-1}(y))} =
\frac{ q(y,v(y))}{p(y)}.
\end{align*}
This proves (1). For (2), let $s=s(r)$ be a solution to $s'(r)=p(s(r))$ in some open interval, and define the curve $\beta(r)=(s(r),v(s(r)))$. Since $v$ is a solution to~\eqref{general-ode}, we obtain
\begin{gather*}
\beta'(r)  = (s'(r), v'(s(r))s'(r)) = \left(p(s(r)),\frac{q(s(r),v(s(r)))}{p(s(r))}s'(r)\right)\\
=\big(p(s(r)),q(s(r),v(s(r)))\big)=X(\beta(r)).
\end{gather*}
This finishes the proof.\end{proof}

Now, we are going to solve some boundary problems related to~\eqref{general-ode}.
\begin{lemma}\label{boundarysol} Let $(s_0,x_0)\in I_0\times I_1$ such that $p(s_0)=0$, and $q(s_0,x_0)=0$. Assume that the following condition holds:
\begin{equation} \label{condition} p'(s_0)\frac{\partial q}{\partial x}(s_0,x_0)<0. \end{equation}
Then, the boundary problem
\begin{equation}\label{boundary-problem}
v'(s)=\frac{q(s,v(s))}{p(s)}, \quad v(s_0)=x_0,
\end{equation} has a unique solution $\vv:J\to\R$ defined on a suitable open interval $J \ni s_0$.
\end{lemma}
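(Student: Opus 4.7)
The plan is to turn the singular boundary problem into a regular dynamical-systems question for the vector field $X(s,x)=(p(s),q(s,x))$ on $I_0\times I_1$ and apply the stable/unstable manifold theorem at $(s_0,x_0)$. The standing hypothesis says that $(s_0,x_0)$ is a zero of $X$, and the Jacobian
\[
DX(s_0,x_0)=\begin{pmatrix} p'(s_0) & 0 \\ \tfrac{\partial q}{\partial s}(s_0,x_0) & \tfrac{\partial q}{\partial x}(s_0,x_0) \end{pmatrix}
\]
is triangular with eigenvalues $\lambda_1 := p'(s_0)$ and $\lambda_2 := \tfrac{\partial q}{\partial x}(s_0,x_0)$. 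Condition~\eqref{condition} says $\lambda_1\lambda_2<0$, so both eigenvalues are nonzero with opposite signs and $(s_0,x_0)$ is a hyperbolic saddle. The eigenspace for $\lambda_2$ is vertical, while one checks that the eigenspace for $\lambda_1$ is spanned by $\xi=(\lambda_1-\lambda_2,\tfrac{\partial q}{\partial s}(s_0,x_0))$, whose first component is nonzero.

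Next, I would apply the stable/unstable manifold theorem to obtain the unique $C^1$ one-dimensional invariant manifold $W$ of $X$ through $(s_0,x_0)$ tangent to $\xi$. Because $\xi$ is transverse to the vertical direction, $W$ is locally the graph of a $C^1$ function $\vv\colon J\to I_1$ over an open interval $J\ni s_0$ with $\vv(s_0)=x_0$. To verify $\vv$ solves~\eqref{boundary-problem}, I parametrise $W$ by an integral curve $\beta(r)=(\tilde s(r),\tilde x(r))$ of $X$; then $\tilde s'=p\circ\tilde s$, and since $p'(s_0)\neq 0$ and $p(s_0)=0$ imply that $\tilde s=s_0$ is an isolated equilibrium of this one-dimensional ODE, $\tilde s$ is strictly monotone and hence a local diffeomorphism away from $r$-values where $\tilde s=s_0$. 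By Lemma~\ref{integralcurves}(1), $\vv=\tilde x\circ \tilde s^{-1}$ satisfies~\eqref{general-ode} on $J\setminus\{s_0\}$. A direct L'H\^opital argument (or the $C^1$-regularity of $\vv$ at $s_0$) shows the singular equation is also satisfied in the limit, with $\vv'(s_0)=\tfrac{\partial q}{\partial s}(s_0,x_0)/(\lambda_1-\lambda_2)$, matching the slope of $\xi$.

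For uniqueness, suppose $\vv_1,\vv_2$ are two solutions on open intervals containing $s_0$. Applying Lemma~\ref{integralcurves}(2) separately on each side of $s_0$ produces integral curves of $X$ lying in $\{s<s_0\}\cup\{s>s_0\}$, each of which is a graph over $s$ and accumulates at the equilibrium $(s_0,x_0)$. Such trajectories must approach $(s_0,x_0)$ tangentially to a non-vertical eigendirection of $DX(s_0,x_0)$, namely the one spanned by $\xi$, and hence lie on the stable/unstable manifold $W$ by the uniqueness part of the stable/unstable manifold theorem. Therefore $\vv_1$ and $\vv_2$ both coincide with the graph of $W$ near $s_0$, which gives uniqueness.

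The main obstacle is precisely the singular nature of the ODE at $s_0$: the right-hand side $q/p$ is of the form $0/0$ there, so Picard--Lindel\"of does not apply directly. The essential trick is the reduction afforded by Lemma~\ref{integralcurves}, which turns the problem into a geometric question about integral curves through a fixed point of $X$; the hyperbolicity guaranteed by~\eqref{condition} is exactly what is needed for the stable/unstable manifold theorem to produce a unique $C^1$ graph-like invariant curve transverse to the vertical, yielding both existence and uniqueness of $\vv$.
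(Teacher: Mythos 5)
Your proposal is correct and follows essentially the same route as the paper: recast the singular ODE as the planar vector field $X(s,x)=(p(s),q(s,x))$, use condition~\eqref{condition} to identify $(s_0,x_0)$ as a hyperbolic saddle, take the invariant curve transverse to the vertical eigendirection from the stable/unstable manifold theorem, and convert it into a solution via Lemma~\ref{integralcurves}. You supply somewhat more detail than the paper on the regularity at $s_0$ and on why any solution must lie on that invariant curve, but the underlying argument is the same.
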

\begin{proof} We reuse the vector field $X:I_0\times I_1\to\R^2$, $X(s,x)=(p(s),q(s,x))$.
Note that $X(s_0,x_0)=(0,0)$. The differential of $X$ at $(s_0,x_0)$ is
\[ DX(s_0,x_0)=\begin{pmatrix} p'(s_0) & \frac{\partial q}{\partial s}(s_0,x_0)  \\ 0 &
\frac{\partial q}{\partial x}(s_0,x_0)
\end{pmatrix}.\]
Condition~\eqref{condition} implies that $DX(s_0,x_0)$ is diagonalisable, and its eigenvalues have different sign. By Section 3.2 of~\cite{W2003}, there are two $\mathcal{C}^{\infty}$ curves (the stable and the unstable submanifolds) passing through the point  $(s_0,x_0)$ and such that the tangent lines are spanned by the eigenvectors of $DX(s_0,x_0)^T$. One of them is parallel to $(0,1)$, so it cannot generate a solution as in Lemma~\ref{integralcurves}. However, the other curve can be parametrised by  $\beta(r)=(s(r),x(r))$ in some interval, with  $\beta(0)=(s_0,x_0)$, providing a solution to~\eqref{boundary-problem} as in Lemma~\ref{integralcurves}, namely  $\vv(y)=x(s^{-1}(y))$, with boundary condition $\vv(s_0)=x_0$. The uniqueness of (local) integral curves of $X$ implies that $\vv$ is unique.
\end{proof}

\section{Calculation of the mean curvature~\eqref{eq:mean_curvature} } \label{appendix:mean_curvature}

Recall from Section~\ref{sec:applications} the Riemannian submersion 
\[ 
\pi \colon \check{\mathbb{H}}^n \to (0, +\infty) \, , \quad \pi(x) = \frac{x_n}{x_1} \, , 
\] 
where $\check{\mathbb{H}}^n = \left\{ x \in \mathbb{H}^n \colon x_{n} > 0 \right\}$. In this appendix, we show that the mean curvature of the fibre $\pi^{-1 }(\{s\}) \subseteq \check{\mathbb{H}}^n$, $s>0$, with respect to the unit normal vector field $- \nabla \pi / \lVert \nabla \pi \rVert$ is given by
\[
         h(s) = (n-1) \frac{s}{\sqrt{1+s^2}} \, , 
\]
First, recall that the hyperbolic metric $g$ on $\check{\mathbb{H}}^n \subseteq \mathbb{H}^n$ is 
\[ 
g = e^{2 \gamma} \langle \cdot , \cdot\rangle \, , 
\]
where $\gamma = - \log{x_1}$ and $\langle \cdot , \cdot \rangle$ is the Euclidean metric. Hence, the corresponding Levi-Civita connections $\nabla$ and $\nabla^{\mathrm{Eucl}}$ are related by the usual conformal change formula 
\begin{align} \label{eq:LC_conformal}
\nabla_X Y &= \nabla^{\mathrm{Eucl}}_X Y + X(\gamma) Y + Y(\gamma) X - \langle X,Y \rangle \grad_{\mathrm{Eucl}} (\gamma) \notag \\
&= \nabla^{\mathrm{Eucl}}_X Y + X(\gamma) Y + Y(\gamma) X + \frac{1}{x_1} \langle X,Y \rangle \partial_{1} \, .
\end{align}
Now, let $e_i = x_1 \partial_i$, so that $(e_1 , \dots , e_n)$ is a global $g$-orthonormal frame of $\mathbb{H}^n$. The gradient of $\pi$ is given by  
\begin{align*}
\nabla \pi &= \sum_{i=1}^{n} g\left( \nabla \pi , e_i \right) e_i = x_1^2\sum_{i=1}^{n} g\left( \nabla \pi , \partial_i \right) \partial_i = x_1^2\sum_{i=1}^{n} \frac{\partial \pi}{\partial x_i} \partial_i \\
&= x_1^2 \left( -\frac{x_n}{x_1^2} \partial_1 + \frac{1}{x_1} \partial_n \right) = - x_n \partial_1 + x_1 \partial_n \, . 
\end{align*}
Thus, the square of its norm is 
\[
\lVert \nabla \pi \rVert^2 = g \left( \nabla \pi , \nabla \pi \right) = 1 + \left( \frac{x_n}{x_1} \right)^2 = 1+s^2 \, , 
\]
which proves equation~\eqref{eq:alpha}. A unit normal vector field to the fibres of $\pi$ is then 
\[ 
\xi := -\frac{\nabla \pi}{\lVert \nabla \pi \rVert} = \frac{x_1^2}{\sqrt{x_1^2 + x_n^2}} \left( s \partial_1 - \partial_n \right) \, . 
\]
An orthonormal frame of the fibre $\pi^{-1}(\{s\})$ is given by 
\[ 
\left( x_1 \partial_2 , \dots , x_1 \partial_{n-1} , T := \frac{x_1}{\sqrt{1+s^2}} \left( \partial_1 + s \partial_n \right) \right) \, . 
\]
Hence, the mean curvature of the fibre with respect to $\xi$ is given by 
\begin{align*}
h &= \sum_{j=2}^{n-1} g \left( \nabla_{x_1 \partial_j} x_1 \partial_j , \xi  \right) + g\left( \nabla_{T} T , \xi \right) = x_1^2 \sum_{j=2}^{n-1} g \left( \nabla_{\partial_j} \partial_j , \xi  \right) + g\left( \nabla_{T} T , \xi \right) \\
&= x_1^2 \sum_{j=2}^{n-1} g \left( \frac{1}{x_1} \partial_1 , \xi  \right) + g\left( \nabla_{T} T , \xi \right) = (n-2) \frac{s}{\sqrt{1+s^2}} +  g\left( \nabla_{T} T , \xi \right) \, , 
\end{align*}
where the third equality uses~\eqref{eq:LC_conformal}. Finally, a straightforward calculation using~\eqref{eq:LC_conformal} again shows that  
\begin{align*}
\nabla_T T = \frac{x_1 x_n}{x_1^2 + x_n^2} \left( x_n \partial_1 -  x_1 \partial_n \right) \, . 
\end{align*}
Putting everything together, we conclude the proof of equation~\eqref{eq:mean_curvature}. 

\section*{Acknowledgements}

D.~Artacho is funded by the UK Engineering and Physical Sciences Research Council (EPSRC), grant EP/W523872.

M.~Ortega is partially financed by: (1) the Spanish MICINN and ERDF project PID2020-116126GB-I00; and (2) the “Maria de Maeztu” Excellence Unit IMAG, ref. CEX2020-001105-M, funded by MCIN/AEI/10.13039/501100011033.

\bibliographystyle{alphaurl}
\bibliography{references.bib}

\end{document}